\newtheorem{proposition}{Proposition}[subsection]
\newcommand{\diag}{\operatorname{diag}}
\newcommand{\n}{\mathfrak{n}}
\renewcommand{\H}{\operatorname{H}}
\newcommand{\Lie}{\operatorname{Lie}}
\newcommand{\Stab}{\operatorname{Stab}}
\newcommand{\supder}{\operatorname{SupDer}}
\newcommand{\innsupder}{\operatorname{InnSupDer}}
\newcommand\im{\operatorname{Im}}
\newcommand{\coker}{\operatorname{Coker}}
\renewcommand\ker{\operatorname{ker}}
\renewcommand\H{\operatorname{H}}
\newcommand\Hom{\operatorname{Hom}}
\newcommand{\overbar}[1]{\mkern 1.5mu\overline{\mkern-1.5mu#1\mkern-1.5mu}\mkern 1.5mu}
\newtheorem{theorem}{Theorem}[subsection]
\title[Cohomology Groups for BBW Parabolics for Lie Superalgebras]{On First and Second Cohomology Groups for BBW Parabolics for classical Lie Superalgebras}
\author{David M. Galban}
\address
{Department of Mathematics\\ University of Georgia \\Athens\\ GA~30602, USA}
\email{david.galban25@uga.edu}
\thanks{The author was partially supported by NSF (RTG) grant DMS-1344994}
\begin{document}

\maketitle

\begin{abstract}Let ${\mathfrak g}$ be a classical simple Lie superalgebra.  In this paper, the author studies the cohomology groups for the subalgebra $\mathfrak{n}^{+}$ relative to the BBW parabolic subalgebras constructed by D. Grantcharov, N. Grantcharov, Nakano and Wu. These classical Lie superalgebras have a triangular decomposition ${\mathfrak g}={\mathfrak n}^{-}\oplus {\mathfrak f} 
\oplus {\mathfrak n}^{+}$ where $\mathfrak f$ is a detecting subalgebra as introduced by Boe, Kujawa and Nakano. It is shown that there exists a Hochschild-Serre spectral sequence that collapses for all infinite families of classical simple Lie superalgebras.  This enables the author to explicitly compute the first and second cohomologies for ${\mathfrak n}^{+}$. The paper concludes with tables listing the weight space decompositions and dimension formulas for these cohomology groups.
\end{abstract}

\section{Introduction}

\subsection{\ } For $\mathfrak{g}$ a semisimple Lie algebra over $\mathbb{C}$, $J$ a subset of simple roots and ${\mathfrak p}_{J}={\mathfrak l}_{J}\oplus {\mathfrak u}_{J}$ the corresponding parabolic subalgebra, a famous theorem of Kostant demonstrates that 
$$\operatorname{H}^k(\mathfrak{u}_J, L(\mu))= \bigoplus_{w \in W^{J}, \, l(w)=k} L_J(w \cdot \mu),$$
where $L_J(w \cdot \mu)$ is an irreducible finite-dimensional module corresponding to the Levi factor ${\mathfrak l}_{J}$ for $J$ \cite{UGA}. 
Kostant's theorem is piece of a larger picture where in the (parabolic) Category ${\mathcal O}_{J}$ one has the isomorphism: 
\begin{equation} \label{e:Kostant-KL} 
\text{Ext}^{n}_{{\mathcal O}_{J}}(Z_{J}(\lambda),L(\mu))\cong 
\text{Hom}_{{\mathfrak l}_{J}}(L_{J}(\lambda),\text{H}^{n}({\mathfrak u}_{J},L(\mu))),
\end{equation} 
where $Z_{J}(\lambda)$ is a (parabolic) Verma module arising from inducing a finite-dimensional ${\mathfrak l}_{J}$-module $L_{J}(\lambda)$ and $L(\lambda)$ is an irreducible representation in ${\mathcal O}_{J}$. 
It is a deep theorem that these extension groups in (\ref{e:Kostant-KL}) can be computed via Kazhdan-Lusztig polynomials \cite{kumar}.

\subsection{\ } In the case when ${\mathfrak g}$ is a classical simple Lie superalgebra one would like to have a Kazhdan-Lusztig theory and a Kostant-type theorem in the context of a Category ${\mathcal O}$ theory. D. Grantcharvov, N. Grantcharov, Nakano and Wu \cite{GGNW} introduced the notion of a BBW parabolic subalgebra, ${\mathfrak b}$, that contains the detecting subalgebra, ${\mathfrak f}$,  earlier introduced by Boe, Kujawa and Nakano \cite{BKN}. One can view that algebra ${\mathfrak f}$ like a Levi subalgebra and ${\mathfrak b}$ as a parabolic containing ${\mathfrak f}$. There exists a natural triangular decomposition of ${\mathfrak g}={\mathfrak n}^{-}\oplus {\mathfrak f}\oplus {\mathfrak n}^{+}$ where ${\mathfrak b}={\mathfrak f}\oplus {\mathfrak n}^{+}$ where the Lie superalgebras ${\mathfrak n}^{\pm}$ are nilpotent subalgebras. 

Recently, Lai, Nakano and Wilbert \cite{LNW}  have constructed a Category ${\mathcal O}_{\mathfrak f}$ via this triangular decomposition and have proved an analog to (\ref{e:Kostant-KL}). 
Other efforts have been made in understanding a Category ${\mathcal O}$ for Lie superalgebras on a case-to-case basis; however, prior to \cite{LNW} there has not been a unified treatment.  A fundamental question is to compute $\text{H}^{n}({\mathfrak n}^{+},L(\lambda))$, where $L(\lambda)$ is a finite-dimensional ${\mathfrak g}$-module and to determine if there is a 
Kostant-type theorem in the ${\mathcal O}_{\mathfrak f}$.  This paper aims to provide the first calculation in this direction. 

\subsection{Outline}
The paper is organized as follows.  In Section 2 we review the definitions of Lie superalgebras,  Lie superalgebra cohomology, and detecting and nilpotent subalgebras.  In Section 3, a Hochschild-Serre spectral sequence is defined for each of the infinite families of classical Lie superalgebras and it is shown that in each case, it collapses.

In Section 4, the notion of a superderivation is defined and it is shown how first cohomology for arbitrary modules can be expressed as a quotient of the set of superderivations. 
We then provide a formula for $\operatorname{H}^1(\mathfrak{n},\mathbb{C})$ and compute its dimension.

In Section 5, we first interpret second cohomology as giving the set of classes of central extensions for superalgebras.  Expressions for $\operatorname{H}^2(\mathfrak{n},\mathbb{C})$ for each classical Lie superalgebra in terms of their weight spaces are found, as well as formulas for their dimension.  Finally, in Section 6 we summarize the weight spaces and dimensions of both the $\operatorname{H}^1$ and $\operatorname{H}^2$ cohomologies in a series of tables.

\subsection{Acknowledgements} This paper is part of the author's Ph.D dissertation at the University of Georgia. He acknowledges his Ph.D advisor, Daniel K. Nakano, for his 
guidance throughout the project. He also thanks Shun-Jen Cheng for his insights about the exceptional families of simple Lie superalgebras.  

\section{Preliminaries}

\subsection{Notation}

Throughout this paper, all vector spaces, unless otherwise noted, will be over $\mathbb{C}$.
A superspace is a vector space $V=V_{\bar{0}} \oplus V_{\bar{1}}$ with a $\mathbb{Z}_2$-grading.  An element $v \in V_{\bar{0}}$ is referred to as even, and an element in $V_{\bar{1}}$ as odd.  Such an element in either $V_{\bar{0}}$ or $V_{\bar{1}}$ is referred to as homogeneous.  If $v$ is homogeneous, we define the degree $|v|$ of $v$ as the element $i \in \mathbb{Z}_2$ such that $v \in V_i$.

A Lie superalgebra is a superspace $\mathfrak{g} = \mathfrak{g}_{\bar{0}} \oplus \mathfrak{g}_{\bar{1}}$ equipped with a bilinear multiplication $[\cdot, \cdot]$ satisfying the following properties:
\begin{enumerate}
    \item $[\mathfrak{g}_i,\mathfrak{g}_j] \subseteq \mathfrak{g}_{i+j}$
    \item $[a,b]= -(-1)^{|a|\cdot |b|}[b,a]$
    \item $[a,[b,c]]=[[a,b],c]+(-1)^{|a| \cdot |b|}[b,[a,c]],$
\end{enumerate}
where properties 2 and 3 hold for homogeneous elements, and the multiplication is extended to all of $\mathfrak{g}$ linearly \cite[Definition 1.3]{CW}.
A $\mathfrak{g}$-module $M$ is a superspace equipped with an action by $\mathfrak{g}$ that is compatible with the $\mathbb{Z}_2$ grading.

The notion of a universal enveloping algebra generalizes to the superalgebra case as well.  Given a superalgebra $\mathfrak{g}$ let $T(\mathfrak{g})$ denote the tensor algebra on $\mathfrak{g}$.  Let $I$ denote the ideal generated by elements of the form
$$x \otimes y - (-1)^{|x||y|} y \otimes x - [xy]$$
Let $U(\mathfrak{g}) = T(\mathfrak{g})/I$ and let $i$ be the canonical embedding of $\mathfrak{g}$ into $U(\mathfrak{g})$.  Then $U(\mathfrak{g})$ satisfies the universal property that if $j: \mathfrak{g} \to M$ is any linear map satisfying

$$j([xy]) = j(x)j(y) - (-1)^{|x||y|}j(y)j(x)$$
then there is a unique homomorphism $\phi:U(\mathfrak{g}) \to M$ such that $\phi \circ i = j$.
We let $I\mathfrak{g}$ denote the augmentation ideal of $U(\mathfrak{g})$.

\subsection{Lie superalgebra cohomology}

We define the Lie superalgebra cohomology of $\mathfrak{g}$ with coefficients in a module $M$ as follows.  Consider the Koszul complex  whose cochain groups are given as
$$C^n(\mathfrak{g},M) = \Hom(\Lambda_s^n(\mathfrak{g}),M),$$
where $\Lambda_s^n(\mathfrak{g})$ denotes the superexterior algebra
$$\Lambda_s^n(\mathfrak{g}) := \bigoplus_{i+j=n}\Lambda^i(\mathfrak{g}_{\bar{0}}) \otimes S^j(\mathfrak{g}_{\bar{1}}).$$
The differential maps $d^n:C^n(\mathfrak{g},M) \rightarrow C^{n+1}(\mathfrak{g},M)$, for homogeneous $f$, are given by the formula
\begin{equation} \label{e:differential}
\begin{aligned}
df(\omega_0 \wedge \cdots \wedge \omega_{n}){} = & \sum_{i=0}^n (-1)^{\tau_i}\omega_i \cdot f(\omega_0 \wedge \cdots \wedge \widehat{\omega_i} \wedge \cdots \wedge \omega_n) \\ & + \sum_{i<j} (-1)^{\sigma_{i,j}}f([\omega_i,\omega_j] \wedge \omega_0 \wedge \cdots \widehat{\omega_i} \cdots \widehat{\omega_j} \cdots \wedge \omega_n),
\end{aligned}
\end{equation}

where
$$\tau_i = i +|\omega_i|(|\omega_0|+ \cdots + |\omega_{i-1}| + |f|),$$
and
$$\sigma_{i,j}= i+j+|\omega_i||\omega_j| +|\omega_i|(|\omega_0|+ \cdots + |\omega_{i-1}|)+|\omega_j|(|\omega_0|+ \cdots + |\omega_{j-1}|),$$
and which is then extended linearly to all of $C^n(\mathfrak{g},M)$.  It follows that $d^n \circ d^{n-1} = 0$, and so we define the $n$th cohomology group as
$$\operatorname{H}^n(\mathfrak{g},M) = \ker{d^n}/\im{d^{n-1}}.$$
Letting $\mathbb{C}$ denote the $\mathfrak{g}$-module concentrated in the even component of dimension 1 on which $\mathfrak{g}$ acts trivially, we define the cohomology of $\mathfrak{g}$ as $\operatorname{H}^n(\mathfrak{g}, \mathbb{C}).$

\subsection{Detecting and nilpotent subalgebras}

We now define the notion of a detecting subalgebra, essentially an analog of the Cartan subalgebra in the classical case, following D. Grantcharov, N. Grantcharov, Nakano, and Wu \cite{GGNW}.  We say that a Lie superalgebra $\mathfrak{g}$ is classical if  there is a connected reductive algebraic group $G_{\bar{0}}$ such that $\Lie(G_{\bar{0}}) = \mathfrak{g}_{\bar{0}}$ and if the action of $G_{\bar{0}}$ on $\mathfrak{g}_{\bar{1}}$ differentiates to the adjoint action.

If $\mathfrak{g}$ is a classical Lie superalgebra, $\mathfrak{g}_{\bar{1}}$ admits a stable action by $G_{\bar{0}}$.  Following the construction in \cite[Section 8.9]{BKN}, fix a generic element $x_0 \in \mathfrak{g}_{\bar{1}}$ and set $\displaystyle{H=\Stab_{G_{\bar{0}}}x_0}$. We define $\mathfrak{f}_{\bar{1}} = \mathfrak{g}_{\bar{1}}^H$ and
$\mathfrak{f}_{\bar{0}} = [\mathfrak{f}_{\bar{1}},\mathfrak{f}_{\bar{1}}]$
and let $\mathfrak{f} = \mathfrak{f}_{\bar{0}} \oplus \mathfrak{f}_{\bar{1}}$ be the \textit{detecting subalgebra}.

Moreover, as per \cite[Section 8]{BKN}, we can make the odd roots corresponding to $\mathfrak{f}$ explicit and thus also those corresponding to $\mathfrak{f}_{\bar{1}}$ and $\mathfrak{f}$ itself.  By convention, let $r$ denote the minimum of $m$ and $n$. Let $\Omega$ denote the set of odd roots of $\mathfrak{f}$. Then
$$\mathfrak{f}_{\bar{1}} = \{\sum_{\alpha \in \Omega} (u_\alpha x_\alpha + v_\alpha x_{-\alpha}) \mid  u_\alpha, v_\alpha \in \mathbb{C}\}.$$
$\mathfrak{f}_{\bar{0}}$ can then be obtained by taking brackets.

Let $\epsilon_i$ and $\delta_j$ be linear functionals on diagonal matrices
$$a = \diag(a_1, \cdots, a_{n+m})$$
which satisfy
$$\epsilon_i(a) = a_i$$
and
$$\delta_j(a) = a_{m+j}.$$
Then for each of the classical simple Lie superalgebras, we have the following values for $\Omega$.
\begin{center}
 \begin{tabular}{||c c||} 
 \hline
 $\mathfrak{g}$ & $\Omega$ \\ [0.5ex] 
 \hline\hline
 $\mathfrak{gl}(m | n)$ & $\{\epsilon_i-\delta_i  \:|\: 1 \leq i \leq r\}$  \\ 
 \hline
 $\mathfrak{sl}(m|n)$ & $\{\epsilon_i-\delta_i \:|\: 1 \leq i \leq r\}$  \\
 \hline
 $\mathfrak{psl}(n|n)$ & $\{\epsilon_i-\delta_i \:|\: 1 \leq i \leq n\}$  \\
 \hline
 $\mathfrak{osp}(2m+1|2n)$ & $\{\epsilon_i-\delta_i \:|\: 1 \leq i \leq r\}$  \\
 \hline
 $\mathfrak{osp}(2m|2n)$ & $\{\epsilon_i-\delta_i \:|\:1 \leq i \leq r\}$  \\ [1ex] 
 \hline
 $D(2,1;\,\alpha)$ & $\{\epsilon_1+\epsilon_2+\epsilon_3\}$ \\
 \hline
 $G(3)$ & $\{\epsilon_1+\delta\}$ \\
 \hline
 $F(4)$ & $\{\frac{\epsilon_1+\epsilon_2+\epsilon_3+\epsilon_4}{2}\}$ \\
 \hline
\end{tabular}
\end{center}
In the case of $\mathfrak{q}(n)$ we let $\mathfrak{f}_{\bar{1}}$ be the collection of all matrices whose odd part is diagonal.

Looking at the adjoint action of the maximal torus in $\mathfrak{f}_{\bar{0}}$ on $\mathfrak{g}$ produces a root-space decomposition of $\mathfrak{g}$, and letting $\mathfrak{n}$ denote the space of positive roots and $\mathfrak{n}^-$ the space of negative ones, we obtain a triangular decomposition $\mathfrak{g} = \mathfrak{n}^- \oplus \mathfrak{f} \oplus \mathfrak{n}$.
We also provide a table listing the collection of root spaces corresponding to each of the $\mathfrak{n}^-$ for the classical Lie superalgebras.

\begin{center}
 \begin{tabular}{||c c||} 
 \hline
 $\mathfrak{g}$ & $\Phi_{\bar{1}}^-$ \\ [0.5ex] 
 \hline\hline
 $\mathfrak{gl}(m | n)$ & $\{\epsilon_i-\delta_j, -\delta_i+\epsilon_j \:|\: i<j\}$  \\ 
 \hline
 $\mathfrak{sl}(m|n)$ & $\{\epsilon_i-\delta_j, -\delta_i+\epsilon_j \:|\: i<j\}$  \\
 \hline
 $\mathfrak{osp}(2m+1|2n)$ & $\{-\epsilon_i+\delta_j,\, -\delta_i+\epsilon_j, \, -\epsilon_k-\delta_l, \, -\delta_t \:|\: i < j\}$  \\
 \hline
 $\mathfrak{osp}(2m|2n)$ & $\{\epsilon_i-\delta_j, -\delta_i+\epsilon_j, -\epsilon_k-\delta_l \:|\: i<j\}$  \\ 
 \hline
 $\mathfrak{q}(n)$ & $\{\epsilon_i+\epsilon_j \:|\: i<j \}$\\
 \hline
 $D(2,1;\alpha)$ & $\{(-\epsilon, -\epsilon, -\epsilon)$, $(-\epsilon, -\epsilon, \epsilon)$, $(\epsilon, -\epsilon, -\epsilon)\}$ \\
 \hline
 G(3) & $\{(-\omega_1+\omega_2, -\epsilon)$,\newline $(2\omega_1-\omega_2, -\epsilon)$,\newline $(0, -\epsilon)$,\newline $(\omega_1-\omega_2, -\epsilon)$, \newline$(-2\omega_1+\omega_2, -\epsilon)$, \newline$(-\omega_1, -\epsilon)\}$ \\
 \hline
 F(4) & $\{(\omega_2-\omega_3, -\epsilon)$,\newline $(\omega_1-\omega_2+\omega_3, -\epsilon)$,\newline $(\omega_1-\omega_3, -\epsilon)$, \newline$(-\omega_2+\omega_3, -\epsilon)$\\
  & $(-\omega_1+\omega_2-\omega_3, -\epsilon)$,\newline $(-\omega_1+\omega_3, -\epsilon)$,\newline $(-\omega_3, -\epsilon)\}$ \\
 \hline
\end{tabular}
\end{center}

\section{The Hochschild-Serre Spectral Sequence}
As in the case of classical Lie algebra cohomology, letting $\mathfrak{h}$ denote an ideal of $\mathfrak{g}$, we construct an analogue of the Hochschild-Serre spectral sequence for Lie superalgebras.

Consider a short exact sequence of Lie superalgebras
$$0 \rightarrow \mathfrak{h} \rightarrow \mathfrak{g}\rightarrow \mathfrak{g}/\mathfrak{h} \rightarrow 0$$
and functors: $$\mathcal{F}:\mathfrak{g}/\mathfrak{h}\text{-mod} \rightarrow \mathbb{C}\text{-mod}$$
$$\mathcal{G}:\mathfrak{g}\text{-mod} \rightarrow \mathfrak{g}/\mathfrak{h}\text{-mod},$$
which are given by $\mathcal{F}(-) = \operatorname{H}^0(\mathfrak{g}/\mathfrak{h}, -)$ and $\mathcal{G}(-) = \operatorname{H}^0(\mathfrak{h}, -)$.  Both $\mathcal{F}$ and $\mathcal{G}$ satisfy the conditions given in \cite[Proposition 4.1]{J}, and so we obtain a Grothendieck spectral sequence:
$$E^{p,q}_2=R^p\mathcal{F}(R^q(\mathcal{G}(-))),$$
which converges to $R^{p+q}(\mathcal{FG})(-)$. As $\mathcal{F}\circ\mathcal{G} = \operatorname{H}^0(\mathfrak{g}, -)$, this simplifies to

$$E^{p,q}_2 = \operatorname{H}^p(\mathfrak{g}/\mathfrak{h}, \operatorname{H}^q(\mathfrak{h}, -)) \Rightarrow \operatorname{H}^{p+q}(\mathfrak{g}, -).$$

\subsection{Infinite families}
In this section, we provide a basis for $\mathfrak{n}$ for each of the infinite families of classical simple Lie superalgebras, and define an ideal $\mathfrak{I}$ of $\mathfrak{n}$.  As a consequence, for each family we will obtain a short exact sequence
$$0 \rightarrow \mathfrak{I} \rightarrow \mathfrak{n} \rightarrow \mathfrak{n/I} \rightarrow 0,$$
which will give rise to a Hochschild-Serre spectral sequence
$$E^{ij}_2 = \operatorname{H}^i(\mathfrak{n/I},\operatorname{H}^j(\mathfrak{I},\mathbb{C})) \Rightarrow \operatorname{H}^{i+j}(\mathfrak{n}, \mathbb{C}).$$
We then show in the following section that each of these spectral sequences collapses.

\subsubsection{$\mathfrak{gl}(m|n)$}

Let $\mathfrak{g} = \mathfrak{gl}(m|n)$ where $m \geq n$ and let $\mathfrak{n}^- \oplus \mathfrak{f} \oplus \mathfrak{n}$ be its triangular decomposition.  Following \cite[Section 1.1.2]{CW} we label the rows and columns of elements of $\mathfrak{gl}(m|n)$ by elements of the set $\{\bar{1}, \cdots \bar{m}, 1, \cdots n\}$.  We let $E_{ij}$ denote the elementary matrix for row $i$ and column $j$.  Then $\mathfrak{n}$ is spanned by

\[ \begin{cases} 
      E_{\bar{i},\bar{j}} \quad (\epsilon_i-\epsilon_j) & 1 \leq i < j \leq m \\
      E_{{i},{j}} \quad (\delta_i - \delta_j) & 1 \leq i < j \leq n \\
      E_{\bar{i},{j}} \quad (\epsilon_i-\delta_j)& 1 \leq i  \leq m, 1 \leq j \leq n, i < j \\
      E_{{i},\bar{j}} \quad (\delta_i-\epsilon_j) & 1 \leq i \leq n, 1 \leq j \leq m, i < j, \\
   \end{cases}
\]
where the quantity in parentheses denotes the corresponding weight under the action of the maximal torus.

We let $\mathfrak{I} \subseteq \mathfrak{n}$ be the subalgebra spanned by elements $E_{\bar{i},\bar{m}}$, $E_{\bar{i},n}$, $E_{i, \bar{m}}$, and $E_{i, n}$ in the case where $m=n$, and by just $E_{\bar{i},\bar{m}}$ and $E_{i, \bar{m}}$ when $m>n$, with the appropriate bounds on $i$.  Using the supercommutator identity:
$$[E_{ij},E_{kl}]=\delta_{jk}E_{il}-(-1)^{|E_{ij}|\cdot|E_{kl}|}\delta_{li}E_{jk},$$
it is a simple computation to show that $\mathfrak{I}$ is an ideal of $\mathfrak{n}.$

\subsubsection{$\mathfrak{osp}(2m+1|2n)$}

Let $m \geq n$.  We may view $\mathfrak{osp}(2m+1|2n)$ as being a subalgebra of $\mathfrak{gl}(2m+1|2n)$, and so we may describe its spanning set by means of the same elementary matrices.  In particular, $\mathfrak{osp}(2m+1|2n)$ will be the span of the root vectors and maximal torus as described in \cite[Section 1.2.4]{CW}.  Restricting our view to the weight spaces listed in the above table, let $\mathfrak{n}$ be the subalgebra whose odd component is spanned by the elements:
\[ \begin{cases} 
      E_{k+n,\overline{i+m}} + E_{\bar{i},k} \quad (-\epsilon_i+\delta_j)& \\
      -E_{\overline{i+m},k+n} + E_{k,\bar{i}} \quad (-\delta_i+\epsilon_j) & \\
      E_{k+n,\bar{l}} - E_{\overline{l+m},k} \quad (-\epsilon_k-\delta_l)& \\
      E_{2n+1,\bar{t}} + E_{\overline{t+m}, 2n+1} \quad (\delta_t),&\\
   \end{cases}
\]
where $1 \leq i \leq m$ and $1 \leq k \leq n$, and whose even component is the direct sum of the nilpotent radicals of $\mathfrak{so}(2m+1)$ and $\mathfrak{sp}(2n)$.

We let $\mathfrak{I}$ be the subalgebra of $\mathfrak{n}$ spanned by all root vectors with weights containing an $\epsilon_m$ a $\delta_n$ term.  Again, it may be shown that this constitutes an ideal of $\mathfrak{n}$.

\subsubsection{$\mathfrak{osp}(2m|2n)$}
The $\mathfrak{n}$ arising from $\mathfrak{osp}(2m|2n)$ has a similar basis as in the $\mathfrak{osp}(2m+1|2n)$ case, with an odd part given by:
\[ \begin{cases} 
      E_{k+n,\overline{i+m}} + E_{\bar{i},k} \quad (-\epsilon_i+\delta_j)& \\
      -E_{\overline{i+m},k+n} + E_{k,\bar{i}} \quad (-\delta_i+\epsilon_j) & \\
      E_{k+n,\bar{l}} - E_{\overline{l+m},k} \quad (-\epsilon_k-\delta_l)& \\
   \end{cases}
\]
and an even part given by the direct sum of the nilpotent radicals of $\mathfrak{so}(2m)$ and $\mathfrak{sp}(2n)$.

We may define an ideal just as we did for $\mathfrak{osp}(2m+1|2n)$, letting $\mathfrak{I}$ be the collection of all root vectors corresponding to weights of $\mathfrak{n}$ containing an $\epsilon_m$ term.

\subsubsection{$\mathfrak{q}(n)$}

We may view $\mathfrak{q}(n)$ as the subalgebra of $\mathfrak{gl}(n|n)$ spanned by the elements:
$$\widetilde{E}_{ij} := E_{\bar{i}\bar{j}}+E_{ij} \quad (\epsilon_i-\epsilon_j), \quad \overline{E}_{ij} := E_{i\bar{j}}+E_{\bar{i},j} \quad (\epsilon'_i-\epsilon'_j), \quad 1 \leq i,j \leq n.$$
Then $\mathfrak{n}$ is the subalgebra spanned by all $\widetilde{E}_{ij}$ and $\overline{E}_{ij}$ where $i<j$.  Let $\mathfrak{I}$ be the subalgebra of $\mathfrak{n}$ generated by all $\widetilde{E}_{in}$ and $\overline{E}_{in}$.  Again, it is not too difficult to show that $\mathfrak{I}$ is an ideal of $\mathfrak{n}$.

\subsection{Collapsing}

\begin{theorem}
For any of the infinite families of classical Lie superalgebras $\mathfrak{g}$, the corresponding spectral sequence $E^{ij}_r$ collapses on the $r=2$ page.
\end{theorem}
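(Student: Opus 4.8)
The plan is to run a torus-equivariance (weight) argument. Let $T$ be the maximal torus of $\mathfrak{g}_{\bar 0}$. Its adjoint action preserves $\mathfrak{n}$ and the ideal $\mathfrak{I}$, since each is a sum of $T$-root spaces, so $T$ acts on the whole short exact sequence $0\to\mathfrak{I}\to\mathfrak{n}\to\mathfrak{n}/\mathfrak{I}\to 0$ and hence on every page of the spectral sequence. Because the Hochschild--Serre construction is natural, each differential $d_r\colon E_r^{p,q}\to E_r^{p+r,\,q-r+1}$ is $T$-equivariant and therefore preserves the $T$-weight. The entire theorem reduces to the assertion that on the $E_2$-page a given $T$-weight $\mu$ occurs in at most one row, i.e.\ that the weight of a class determines its $\mathfrak{I}$-cohomological degree $q$: for $r\ge 2$ the differential lowers $q$ by $r-1\ge 1$ while fixing $\mu$, so if distinct rows carry disjoint weights then $d_r=0$ for all $r\ge 2$, which is precisely collapse at $r=2$.

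To see the row on the level of weights, note that $E_2^{p,q}$ is a subquotient of $\operatorname{H}^p(\mathfrak{n}/\mathfrak{I},\Lambda_s^q(\mathfrak{I}^{*}))$, so every weight of $E_2^{p,q}$ is a sum of \emph{exactly} $q$ weights of $\mathfrak{I}^{*}$ together with some weights of $(\mathfrak{n}/\mathfrak{I})^{*}$. It therefore suffices to produce, in each family, a linear functional $\ell$ on the weight lattice that vanishes on every weight of $\mathfrak{n}/\mathfrak{I}$ and is constant, equal to a fixed $c\neq 0$, on every weight of $\mathfrak{I}$. Granting such an $\ell$, each weight of $\Lambda_s^q(\mathfrak{I}^{*})$ has $\ell$-value exactly $-qc$, hence so does each weight of $E_2^{p,q}$, and the rows become weight-disjoint.

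For $\mathfrak{gl}(m\mid n)$ (and the induced cases $\mathfrak{sl}$, $\mathfrak{psl}$) and for $\mathfrak{q}(n)$ the functional is immediate from the spanning sets in Section~3.1. Taking $\ell(\mu)$ to be the coefficient of $\epsilon_m$ plus the coefficient of $\delta_n$ in the case $m=n$ (and just the coefficient of $\epsilon_m$ when $m>n$), one checks that every generator of $\mathfrak{I}$ has $\ell=-1$, whereas no other positive root vector meets $\epsilon_m$ or $\delta_n$, so $\ell\equiv 0$ on $\mathfrak{n}/\mathfrak{I}$. For $\mathfrak{q}(n)$ the analogue is $\ell(\mu)$ equal to the coefficient of $\epsilon_n$ plus that of $\epsilon'_n$, again constantly $-1$ on $\mathfrak{I}$ and $0$ on the quotient. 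In these families one also verifies directly, via the supercommutator identity, that $\mathfrak{I}$ is abelian, which makes the $E_2$-term an honest exterior/symmetric power and streamlines the bookkeeping.

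The orthosymplectic families are where I expect the main obstacle to lie. Here the weights of $\mathfrak{I}$ (the positive roots meeting $\epsilon_m$, respectively $\epsilon_m$ or $\delta_n$) involve $\epsilon_m$ with \emph{both} signs, so no single coordinate functional is constant on $\mathfrak{I}$ and the clean concentration above fails. I would instead prove the weaker but sufficient ``weight determines row'' statement directly: analyze how the $\epsilon_m$- and $\delta_n$-content of a weight $\mu$ appearing in $\Lambda_s^q(\mathfrak{I}^{*})$ constrains the number $q$ of $\mathfrak{I}^{*}$-factors, using the explicit root data of Section~2.3 together with the $\mathfrak{so}$/$\mathfrak{sp}$ nilradical structure. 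The crux is to rule out cancellation, i.e.\ to show that two $\mathfrak{I}$-weights of opposite $\epsilon_m$-sign cannot combine so as to place the same surviving weight in two different rows; once this is established the $T$-equivariance argument of the first paragraph closes the proof uniformly.
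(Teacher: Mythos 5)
Your strategy coincides with the paper's: reduce collapse to equivariance of the differentials under the torus of $\mathfrak{f}$ plus the claim that distinct rows of the $E_2$-page carry disjoint sets of weights, then verify disjointness family by family using the distinguished last index. For $\mathfrak{gl}(m|n)$, $\mathfrak{sl}(m|n)$ and $\mathfrak{q}(n)$ your functional $\ell$ is exactly the paper's count of summands involving $\epsilon_m$ (resp.\ $\epsilon_n$, $\delta_n$): since every weight of $\mathfrak{I}$ has $\ell$-value $-1$ while every weight of $\mathfrak{n}/\mathfrak{I}$ has $\ell$-value $0$, the row is a function of the weight, so every $d_r$ with $r\ge 2$ vanishes. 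That part of your proposal is complete and matches the paper's proof.

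For the orthosymplectic families, however, your proposal is not a proof: it ends by naming the needed statement (``rule out cancellation'') without establishing it, and that statement is genuinely problematic. Concretely, for $\mathfrak{osp}(2m|2n)$ with $m\ge 3$ the even part of $\mathfrak{I}$ contains root vectors of weights $\epsilon_i+\epsilon_m$ and $\epsilon_j-\epsilon_m$ (both lie in the nilradical of $\mathfrak{so}(2m)$ and both involve $\epsilon_m$), and $(\epsilon_i+\epsilon_m)+(\epsilon_j-\epsilon_m)=\epsilon_i+\epsilon_j$ is itself a weight of $\mathfrak{n}/\mathfrak{I}$; hence ``the weight determines the row'' fails at the level of the complex $\Lambda_s^\bullet((\mathfrak{n}/\mathfrak{I})^*)\otimes\Lambda_s^\bullet(\mathfrak{I}^*)$, and could only be restored, if at all, by analyzing which weights survive to $E_2$, i.e.\ by understanding $\operatorname{H}^\bullet(\mathfrak{I},\mathbb{C})$ as an $\mathfrak{n}/\mathfrak{I}$-module. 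What ambient weight bookkeeping does give for $\mathfrak{osp}(2m|2n)$ is a parity statement: every weight of $\mathfrak{I}$ has $\epsilon_m$-coefficient $\pm 1$, so a weight in row $q$ has net $\epsilon_m$-coefficient congruent to $q$ modulo $2$, which kills $d_r$ for all even $r$ (in particular $d_2$) but says nothing about $d_3, d_5,\dots$ acting out of rows $q\ge 2$; even this is unavailable for $\mathfrak{osp}(2m+1|2n)$, where roots such as $-\epsilon_m-\delta_n$ carry both distinguished indices. You should know that the paper's own proof of the $\mathfrak{osp}$ cases is precisely the count you declined to make: it asserts that a weight of $E_r^{pq}$ has ``$q$ copies of $\epsilon_m$'' while the target has only $q+1-r$, treating the number of $\epsilon_m$-summands as an invariant of the weight — valid where $\epsilon_m$ occurs with a single sign (the $\mathfrak{gl}$/$\mathfrak{q}$ situation), but subject to exactly the cancellation you identified in the $\mathfrak{osp}$ situation. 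So your proposal reproduces the paper's argument where that argument is sound, and stops exactly where the paper's argument is itself incomplete; finishing the $\mathfrak{osp}$ cases requires an input beyond weight counting, which neither your proposal nor the paper supplies.
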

\begin{proof}
Recall that the differentials $d_r$ on the $r$th page of a spectral sequence have bidegree $(r,1-r)$, sending $E_r^{ij}$ to $E_r^{i+r, j-r+1}$.  Our goal is to show that for each page $r \geq 2$, the differentials must all be 0. First, note that we may decompose all $E_r^{ij}$ into a direct sum of weight spaces under the action of the maximal torus of $\mathfrak{f}$.  The differentials respect this action, and so to show that $d_r$ is identically 0, it is sufficient to show that no weight in $E_r^{ij}$ appears in $E_r^{i+r, j-r+1}$.  To demonstrate this, we split the proof up into different cases for each classical superalgebra.

\begin{enumerate}

\item{$\mathfrak{gl}(m|n)$}
Consider an arbitrary differential from the $E_2$ page: $d_2:E^{ij}_2 \rightarrow E^{i+2,j-1}_2$.

The term $E^{ij}_2$ is a subquotient of $\Lambda^i_s(\mathfrak{n/I})^* \otimes \Lambda^j_s(\mathfrak{I})^*$, and so any weight of $E^{ij}$ must also be a weight of $\Lambda^i_s(\mathfrak{n/I})^* \otimes \Lambda^j_s(\mathfrak{I})^*$.  As the weights of $\mathfrak{n/I}$ are of the form $\epsilon_k - \delta_l$, $\delta_k - \epsilon_l$, $\epsilon_k-\epsilon_l$ and $\delta_k-\delta_l$ for $1<k<l<n$ and the weights of $\mathfrak{I}$ are of the form $\epsilon_i -\delta_n$, $\delta_i-\delta_n$, $\epsilon_i-\epsilon_n$ and $\delta_i-\epsilon_n$ for $1<i<n$, the weights of $E^{ij}$ all have $j$ summands containing either $\epsilon_n$ or $\delta_n$.  As the weights of $E^{i+2,j-1}$ have only $j-1$ such summands, $d_2$ must be the zero map.

We therefore have that $E^{ij}_3 = E_2^{ij}$ for all $i$ and $j$.  However, we can apply the same argument to the differentials on the $E_r$ page for any arbitrary $r$.  Namely, if the weights in the domain of $d_r$ have j copies of $\epsilon_m$ or $\delta_n$, then those in the image have only $j-r$ such copies.  Thus, $d_r$ must again be the 0 map.  Thus for all $r>2$, $E^{ij}_r=E^{ij}_2$, and so the spectral sequence collapses.

\item{$\mathfrak{sl}(m|n)$}
The collection of weights corresponding to the $\mathfrak{n}$ in $\mathfrak{sl}(m|n)$ are identical to those for $\mathfrak{gl}(m|n)$.  Hence, we may take the same ideal of $\mathfrak{n} \subseteq \mathfrak{sl}(m|n)$ and the same spectral sequence will collapse.

\item{$\mathfrak{osp}(2m+1|2n)$}
The ideal $\mathfrak{I}$ is spanned by all weight spaces of a root containing  $\epsilon_m$.  Thus an arbitrary weight of $E^{pq}_r$ must have a total of q copies of or $\epsilon_m$, whereas those in $E^{p+r, q+(1-r)}_r$ have only $q+1-r$ copies.  Thus any differential $d_r$ must be 0, and so the spectral sequence collapses on the $E_2$ page.

\item{$\mathfrak{osp}(2m|2n)$}
We defined the ideal for $\mathfrak{osp}(2m|2n)$ similarly to how it was defined for $\mathfrak{osp}(2m+1|2n)$, and so the above argument follows in the same way.

\item{$\mathfrak{q}(n)$}
As $E^{ij}_2$ is a subquotient of $\Lambda^i_s(\mathfrak{n/I}^*) \otimes \Lambda^j_s\mathfrak{I}^*$, all of its weights must contain $j$ total summands containing either copy of $\epsilon_n$, whereas $E^{i+r, j+1-r}_2$ only contains $j+1-r$ such copies, and thus an arbitrary differential $d_r:E^{ij}_2 \to E^{i+2,j-1}_2$ must be 0, so the spectral sequence again collapses.

\end{enumerate}
\end{proof}

\section{$\operatorname{H}^1(\mathfrak{n}, \mathbb{C})$ Cohomology}

\subsection{Superderivations}

It is well known that in the case of ordinary Lie algebras, $\operatorname{H}^1(\mathfrak{g},M)$ corresponds to derivations from $\mathfrak{g}$ to $M$ modulo inner derivations \cite{HS}.  This situation generalizes to the Lie superalgebra case.

We define a \textit{superderivation} from a Lie superalgebra $\mathfrak{g}$ to a $\mathfrak{g}$-module $M$ to be a linear map $\phi$ satisfying
$$\phi([xy]) = x \cdot \phi(y) - (-1)^{|x||y|} y \cdot \phi(x).$$
An \textit{inner superderivation} is a derivation of the form $\phi_a(x) = x \cdot a$ for some $a \in M$.

\begin{proposition}
$\supder(\mathfrak{g},M) \cong \Hom(I\mathfrak{g},M)$.
\end{proposition}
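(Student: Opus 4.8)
The plan is to exhibit a pair of mutually inverse maps between $\supder(\mathfrak{g},M)$ and $\Hom_{U(\mathfrak{g})}(I\mathfrak{g},M)$, reading $\Hom(I\mathfrak{g},M)$ as homomorphisms of left $U(\mathfrak{g})$-modules and regarding $M$ as a left $U(\mathfrak{g})$-module via its $\mathfrak{g}$-action. Since the augmentation $\varepsilon \colon U(\mathfrak{g}) \to \mathbb{C}$ kills $\mathfrak{g}$, the canonical embedding $i \colon \mathfrak{g} \hookrightarrow U(\mathfrak{g})$ lands in $I\mathfrak{g}$. First I would define the restriction map
$$\Psi \colon \Hom_{U(\mathfrak{g})}(I\mathfrak{g},M) \to \supder(\mathfrak{g},M), \qquad \Psi(f) = f \circ i.$$
To see $\Psi(f)$ is a superderivation, I would use the relation $xy - (-1)^{|x||y|} yx = [xy]$ in $U(\mathfrak{g})$ together with $U(\mathfrak{g})$-linearity of $f$: for homogeneous $x,y \in \mathfrak{g}$,
$$(f\circ i)([xy]) = f\big(xy - (-1)^{|x||y|}yx\big) = x \cdot f(y) - (-1)^{|x||y|} y \cdot f(x),$$
which is exactly the defining identity of a superderivation.

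Next I would check injectivity of $\Psi$. By the PBW theorem one has $U(\mathfrak{g}) = \mathbb{C} \oplus I\mathfrak{g}$, and $I\mathfrak{g}$ is generated as a left $U(\mathfrak{g})$-module by $i(\mathfrak{g})$; hence any left-module homomorphism out of $I\mathfrak{g}$ is determined by its restriction to $\mathfrak{g}$, so $\ker \Psi = 0$.

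The main step, and the one I expect to be the genuine obstacle, is surjectivity: given a superderivation $\phi$, I must produce a well-defined $U(\mathfrak{g})$-linear extension $f \colon I\mathfrak{g} \to M$ with $f \circ i = \phi$. Rather than checking the naive prescription $f(u\cdot x) = u\cdot\phi(x)$ against the defining relations of $U(\mathfrak{g})$ directly, I would let the universal property from the Preliminaries do that work. Form the square-zero extension $R = U(\mathfrak{g}) \oplus M$, the associative superalgebra in which $M$ is an ideal with $M\cdot M = 0$, the left action on $M$ is the given one, and the right action is $m \cdot u = \varepsilon(u)\, m$; then $M$ is a $U(\mathfrak{g})$-bimodule and $R$ is an associative superalgebra. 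Define $j \colon \mathfrak{g} \to R$ by $j(x) = (i(x), \phi(x))$. Since $\varepsilon$ vanishes on $\mathfrak{g}$, the mixed term $\phi(x)\cdot y$ drops out, and a direct computation gives
$$j(x)j(y) - (-1)^{|x||y|} j(y)j(x) = \big([xy],\; x\cdot\phi(y) - (-1)^{|x||y|} y\cdot\phi(x)\big) = j([xy]),$$
using the superderivation identity in the second coordinate. By the universal property, $j$ extends to an algebra homomorphism $\Phi \colon U(\mathfrak{g}) \to R$. Writing $\Phi(u) = (\alpha(u), \beta(u))$, the projection $\alpha$ is an algebra endomorphism of $U(\mathfrak{g})$ fixing $\mathfrak{g}$, hence $\alpha = \operatorname{id}$; comparing second coordinates in $\Phi(uv) = \Phi(u)\Phi(v)$ then yields $\beta(uv) = u\cdot\beta(v) + \varepsilon(v)\beta(u)$. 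Restricting $\beta$ to the ideal $I\mathfrak{g}$ and using $\varepsilon(w) = 0$ for $w \in I\mathfrak{g}$ shows that $f := \beta|_{I\mathfrak{g}}$ satisfies $f(uw) = u\cdot f(w)$, i.e. $f$ is left $U(\mathfrak{g})$-linear, and $f \circ i = \phi$, so $\Psi(f) = \phi$.

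Finally I would record that $\Psi$ and the assignment $\Theta \colon \phi \mapsto \beta|_{I\mathfrak{g}}$ are mutually inverse: $\Psi\Theta = \operatorname{id}$ is the identity $f \circ i = \phi$ verified above, while $\Theta\Psi = \operatorname{id}$ follows from the injectivity of $\Psi$. I would also note that both sides carry a $\mathbb{Z}_2$-grading by parity and that $\Psi$ is parity-preserving, the odd case being handled by the same square-zero argument after inserting the sign appropriate to an odd map. The crux of the whole argument is the surjectivity step, where invoking the universal property of $U(\mathfrak{g})$ is precisely what guarantees that extending $\phi$ by left-linearity respects all the defining relations of the enveloping algebra.
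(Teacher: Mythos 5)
Your proof is correct, and it takes a genuinely different route at the one step where real work is needed. Both you and the paper use the same easy direction, restriction along the canonical embedding $i$ (your $\Psi(f) = f \circ i$ is the paper's $d_f = f \circ i$). For the hard direction---extending a superderivation $\phi$ to a $U(\mathfrak{g})$-linear map on $I\mathfrak{g}$---the paper writes the map down explicitly on the tensor algebra, $x_1 \otimes \cdots \otimes x_n \mapsto x_1 \cdots x_{n-1} \cdot \phi(x_n)$, and checks by hand that it annihilates the defining ideal $I$ of $U(\mathfrak{g})$, so that it descends to $U(\mathfrak{g})$ and restricts to $I\mathfrak{g}$. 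You instead outsource that verification to the universal property stated in the paper's Preliminaries: you form the square-zero extension $R = U(\mathfrak{g}) \oplus M$, check the bracket condition for $j(x) = (i(x), \phi(x))$, and extract the desired map as the second coordinate $\beta$ of the induced algebra homomorphism, restricted to $I\mathfrak{g}$. The two constructions in fact produce the same map---your identity $\beta(uv) = u \cdot \beta(v) + \varepsilon(v)\beta(u)$, evaluated on monomials, recovers exactly the paper's formula---but your packaging has the advantage that well-definedness, left $U(\mathfrak{g})$-linearity, and $f \circ i = \phi$ all come for free from the universal property, and you give an honest proof of injectivity (via generation of $I\mathfrak{g}$ by $i(\mathfrak{g})$ as a left module) where the paper only asserts that the two assignments are mutually inverse; the paper's computation, on the other hand, is more elementary and exhibits the extension concretely. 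One small remark: your closing caveat about inserting signs for odd $\phi$ is unnecessary under the paper's conventions, since its definition of superderivation carries no sign depending on the parity of $\phi$ and its universal property is stated for arbitrary linear maps with signs governed only by parities in $\mathfrak{g}$, so your square-zero computation goes through verbatim for odd $\phi$ as well.
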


\begin{proof}
Let $d: \mathfrak{g} \to M$ be a superderivation.  Consider the map $f'_d:T(\mathfrak{g}) \to M$ given by $f_d(x_1 \otimes \cdots \otimes x_n) = x_1 \circ \cdots \circ d(x_n)$ and which sends $T^0(\mathfrak{g})$ to 0.  It follows immediately that $f'_d$ vanishes on $I$ and thus defines a morphism on $U(\mathfrak{g})$ which restricts to a homomorphism $f_d:I\mathfrak{g} \to M$.

Conversely, given a homomorphism $f:I\mathfrak{g} \to M$, we can extend it to a map on all of $U(\mathfrak{g})$ by setting $f(T^0(\mathfrak{g})) = 0$ and letting $d_f = f \circ i$.  It is straightforward to show that $f_{d_f} = f$ and $d_{f_d} = d$, and so the map sending $f$ to $d_f$ is an isomorphism between $\supder(\mathfrak{g},M)$ and $\Hom(I\mathfrak{g},M)$.
\end{proof}

\begin{proposition}
$\operatorname{H}^1(\mathfrak{g},M) \cong \supder(\mathfrak{g},M)/\innsupder(\mathfrak{g},M).$
\end{proposition}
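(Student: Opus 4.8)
The plan is to unwind the definition $\operatorname{H}^1(\mathfrak{g},M) = \ker d^1/\img d^0$ by computing the two differentials $d^0$ and $d^1$ explicitly from formula (\ref{e:differential}) and matching them against the definitions of inner superderivation and superderivation, respectively. First I would record the cochain groups in the relevant degrees. Since $\Lambda_s^0(\mathfrak{g}) = \mathbb{C}$, we have $C^0(\mathfrak{g},M) = \Hom(\mathbb{C},M) \cong M$; since $\Lambda_s^1(\mathfrak{g}) = \Lambda^1(\mathfrak{g}_{\bar 0}) \oplus S^1(\mathfrak{g}_{\bar 1}) = \mathfrak{g}$, we have $C^1(\mathfrak{g},M) = \Hom(\mathfrak{g},M)$. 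Throughout I would work with homogeneous cochains $f$ and extend everything linearly at the end, keeping careful track of the degree $|f|$, which enters the sign $\tau_i$.

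Next I would identify $\img d^0$ with $\innsupder(\mathfrak{g},M)$. Specializing (\ref{e:differential}) to $n=0$, the second sum is empty and only the $i=0$ term survives, giving $d^0 f(\omega_0) = (-1)^{|\omega_0||f|}\,\omega_0 \cdot f(1)$. Writing $a = f(1) \in M$ and noting that, under $C^0 \cong M$, an even cochain corresponds to an even element $a$, this reads $d^0 f(\omega_0) = \omega_0 \cdot a = \phi_a(\omega_0)$, exactly the inner superderivation attached to $a$. Running over all $a$ shows $\img d^0 = \innsupder(\mathfrak{g},M)$, with the odd part of $M$ producing the odd inner superderivations up to the degree sign $(-1)^{|\omega_0|}$ that is absorbed into the superspace grading.

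Then I would identify $\ker d^1$ with $\supder(\mathfrak{g},M)$. Specializing (\ref{e:differential}) to $n=1$ and homogeneous $f,\omega_0,\omega_1$, the three signs simplify to $\tau_0 = |\omega_0||f|$, $\tau_1 = 1 + |\omega_1|(|\omega_0|+|f|)$, and $\sigma_{0,1} = 1 + 2|\omega_0||\omega_1| \equiv 1 \pmod 2$, so that
$$d^1 f(\omega_0 \wedge \omega_1) = (-1)^{|\omega_0||f|}\omega_0 \cdot f(\omega_1) - (-1)^{|\omega_1|(|\omega_0|+|f|)}\omega_1 \cdot f(\omega_0) - f([\omega_0,\omega_1]).$$
For an even cochain the prefactors collapse and the equation $d^1 f = 0$ becomes precisely
$$f([\omega_0,\omega_1]) = \omega_0 \cdot f(\omega_1) - (-1)^{|\omega_0||\omega_1|}\omega_1 \cdot f(\omega_0),$$
which is the defining condition of a superderivation. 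Hence $\ker d^1 = \supder(\mathfrak{g},M)$ on the even part, and the odd part matches the correspondingly signed superderivation condition; taking the total space over both parities gives $\ker d^1 = \supder(\mathfrak{g},M)$. Combining the two identifications yields
$$\operatorname{H}^1(\mathfrak{g},M) = \ker d^1/\img d^0 = \supder(\mathfrak{g},M)/\innsupder(\mathfrak{g},M).$$

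The main obstacle I anticipate is purely the sign bookkeeping: formula (\ref{e:differential}) carries the cochain degree $|f|$ inside $\tau_i$, whereas the paper's definitions of superderivation and inner superderivation are written without an explicit $|\phi|$. The care required is to verify that the cocycle and coboundary conditions coincide with those definitions on even cochains on the nose, and that on odd cochains the extra factors $(-1)^{|f||x|}$ are exactly what the superspace grading on $\supder$ and $\innsupder$ demands, so that the isomorphism is one of superspaces. Once the $n=0$ and $n=1$ instances of (\ref{e:differential}) are written out and the identity $\sigma_{0,1}\equiv 1$ is checked, the remainder is a direct comparison of formulas.
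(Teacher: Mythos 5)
Your proposal is correct in outline but takes a genuinely different route from the paper. The paper never touches the Koszul differential: it first identifies $\supder(\mathfrak{g},M)$ with $\Hom(I\mathfrak{g},M)$ (its preceding proposition), then applies $\Hom(-,M)$ to the augmentation sequence $0\to I\mathfrak{g}\to U(\mathfrak{g})\to\mathbb{C}\to 0$ and reads off $\operatorname{H}^1(\mathfrak{g},M)$ as the cokernel of $\Hom(U(\mathfrak{g}),M)\to\Hom(I\mathfrak{g},M)$, the image being exactly the inner superderivations $x\mapsto x\cdot f(1)$. That argument is shorter, but it silently identifies the Koszul-complex cohomology defined in Section 2 with derived-functor cohomology $\Ext^{\bullet}_{U(\mathfrak{g})}(\mathbb{C},M)$, and uses that $\Ext^1(U(\mathfrak{g}),M)=0$ because $U(\mathfrak{g})$ is free over itself. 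Your computation works directly with the complex the paper actually defines, so it is more self-contained; the price is the sign bookkeeping, which is exactly where your write-up is not yet watertight.

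Concretely: your formulas for $d^0$, $d^1$, $\tau_0$, $\tau_1$, and $\sigma_{0,1}\equiv 1$ are all correct, and on \emph{even} cochains the identifications $\img d^0=\innsupder(\mathfrak{g},M)$ and $\ker d^1=\supder(\mathfrak{g},M)$ hold on the nose. But on \emph{odd} cochains the conditions you derive are not the paper's definitions: the paper's superderivation identity and its inner superderivations $\phi_a(x)=x\cdot a$ carry no sign depending on the degree of the map, so the claimed equality $\ker d^1=\supder(\mathfrak{g},M)$ of subspaces of $\Hom(\mathfrak{g},M)$ is false on the odd part, and saying the signs are ``absorbed into the grading'' is an assertion, not an argument. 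The clean fix is the degree-twist automorphism $T$ of $\Hom(\mathfrak{g},M)$ given on homogeneous $f$ by $(Tf)(x)=(-1)^{|f||x|}f(x)$: a short check shows $T(\supder(\mathfrak{g},M))=\ker d^1$ and $T(\innsupder(\mathfrak{g},M))=\img d^0$, and since $\innsupder\subseteq\supder$ (this inclusion should also be verified; it follows from the super module axiom), $T$ induces the desired isomorphism of quotients. You have located the right factor $(-1)^{|f||x|}$, so the gap is one of execution rather than idea, but the proof needs this twist isomorphism exhibited explicitly rather than waved at.
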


\begin{proof}
From the augmentation map, we obtain the following short exact sequence:
$$0 \rightarrow I\mathfrak{g} \rightarrow U(\mathfrak{g}) \rightarrow \mathbb{C} \rightarrow 0.$$
From the corresponding long exact sequence in cohomology, we obtain that
$$\operatorname{H}^1(\mathfrak{g},M) \cong \coker(\Hom(U(\mathfrak{g}),M) \rightarrow \Hom(I\mathfrak{g},M)) \cong \supder(\mathfrak{g},M)/\im(\Hom(U(\mathfrak{g}),M)).$$
However, if $f \in \Hom(U(\mathfrak{g}), M)$, and $f(1) = a$, then the corresponding derivation is $d_f(x) = x \cdot a$, and thus $\operatorname{H}^1(\mathfrak{g},M) \cong \supder(\mathfrak{g},M)/\innsupder(\mathfrak{g},M).$

\end{proof}

In particular, when using trivial coefficients, we have the following result:

\begin{theorem}
$\operatorname{H}^1(\mathfrak{g}, \mathbb{C}) \cong (\mathfrak{g}/[\mathfrak{g},\mathfrak{g}])^*$.
\end{theorem}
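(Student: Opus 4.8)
The plan is to derive this directly from the preceding proposition by specializing to the trivial module $M = \mathbb{C}$. That proposition gives $\operatorname{H}^1(\mathfrak{g}, \mathbb{C}) \cong \supder(\mathfrak{g}, \mathbb{C})/\innsupder(\mathfrak{g}, \mathbb{C})$, so it suffices to compute each of the two pieces when the coefficients are trivial.

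First I would dispose of the inner superderivations. Since $\mathfrak{g}$ acts trivially on $\mathbb{C}$, we have $x \cdot a = 0$ for every $x \in \mathfrak{g}$ and $a \in \mathbb{C}$. Hence the inner superderivation $\phi_a(x) = x \cdot a$ is identically zero for each $a$, so that $\innsupder(\mathfrak{g}, \mathbb{C}) = 0$ and therefore $\operatorname{H}^1(\mathfrak{g}, \mathbb{C}) \cong \supder(\mathfrak{g}, \mathbb{C})$.

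Next I would identify $\supder(\mathfrak{g}, \mathbb{C})$ explicitly. A superderivation $\phi \colon \mathfrak{g} \to \mathbb{C}$ satisfies $\phi([xy]) = x \cdot \phi(y) - (-1)^{|x||y|} y \cdot \phi(x)$; with the trivial action both terms on the right-hand side vanish, so the defining condition collapses to $\phi([xy]) = 0$ for all homogeneous $x, y$. Thus the superderivations into $\mathbb{C}$ are precisely the linear functionals on $\mathfrak{g}$ that annihilate the derived subalgebra $[\mathfrak{g}, \mathfrak{g}]$. Because $[\mathfrak{g}, \mathfrak{g}]$ is a homogeneous subspace, its annihilator in $\mathfrak{g}^*$ is canonically isomorphic to $(\mathfrak{g}/[\mathfrak{g},\mathfrak{g}])^*$ via the standard duality between subspaces and quotients, completing the chain $\operatorname{H}^1(\mathfrak{g}, \mathbb{C}) \cong \supder(\mathfrak{g}, \mathbb{C}) \cong (\mathfrak{g}/[\mathfrak{g},\mathfrak{g}])^*$.

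I do not expect any serious obstacle, as the argument is essentially a specialization of the previous proposition. The only point requiring a little care is the bookkeeping of the $\mathbb{Z}_2$-grading: one should confirm that the sign $(-1)^{|x||y|}$ plays no role once the action is trivial (it merely multiplies a term that is already zero), and that passing to the annihilator respects the grading, so that the final identification with $(\mathfrak{g}/[\mathfrak{g},\mathfrak{g}])^*$ is an isomorphism of superspaces and not merely of the underlying vector spaces.
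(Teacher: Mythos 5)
Your proof is correct and is precisely the argument the paper intends: the theorem is stated there as an immediate consequence (``in particular, when using trivial coefficients'') of the proposition $\operatorname{H}^1(\mathfrak{g},M) \cong \supder(\mathfrak{g},M)/\innsupder(\mathfrak{g},M)$, and you have simply filled in the specialization to $M=\mathbb{C}$ --- inner superderivations vanish, and superderivations into the trivial module are exactly the functionals annihilating $[\mathfrak{g},\mathfrak{g}]$. No gaps; your attention to the grading is a welcome extra detail the paper leaves implicit.
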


\subsection{Explicit calculations}
By the above theorem, to compute the first cohomology, it is sufficient to describe both $\mathfrak{n}$ and $[\mathfrak{n}, \mathfrak{n}]$. As we have already provided bases for $\mathfrak{n}$ in Section 3, below we do the same for $[\mathfrak{n}, \mathfrak{n}]$ and give formulas for the dimensions of $\mathfrak{n}$, $[\mathfrak{n}, \mathfrak{n}]$, and $\operatorname{H}^1(\mathfrak{n}, \mathbb{C})$.  A table of corresponding weights is given in Section 6.

\subsubsection{$\mathfrak{gl}(m|n)$}

We have that the elementary matrices $E_{ij}$ that span $\mathfrak{n}$ will be in $[\mathfrak{n}, \mathfrak{n}]$ precisely when $j-i \geq 2$, and so $[\mathfrak{n}, \mathfrak{n}]$ will have a basis given by
\[ \begin{cases} 
      E_{\bar{i},\bar{j}} & 1 \leq i ,j \leq m, j-i \geq 2 \\
      E_{{i},{j}} & 1 \leq i , j \leq n, j-i \geq 2 \\
      E_{\bar{i},{j}} & 1 \leq i  \leq m, 1 \leq j \leq n, j-i \geq 2 \\
      E_{{i},\bar{j}} & 1 \leq i \leq n, 1 \leq j \leq m, j-i \geq 2.\\
   \end{cases}
\]
The Lie superalgebra $\mathfrak{n}$ has dimension ${m\choose2} + n \cdot (m-n) + 3\cdot {n\choose 2}$ and $[\mathfrak{n}, \mathfrak{n}]$ has dimension $${m-1\choose 2}+2 \cdot {n-1 \choose 2} + n \cdot (m-n-1) + {n \choose 2},$$ and so $\operatorname{H}^1(\mathfrak{n}, \mathbb{C})$ has dimension $m-1+n-1+n-1+n = m+3n-3$.  The weights of $\operatorname{H}^1(\mathfrak{n}, \mathbb{C})$ can be found by using the information listed in the previous section and are included in the tables in Section 6.

\subsubsection{$\mathfrak{sl}(n|n)$}
The weight space decomposition for $\mathfrak{n}$ is identical to that in the $\mathfrak{gl}(n|n)$ case, and thus the above dimension formula and weight space decomposition hold.

\subsubsection{$\mathfrak{osp}(2m|2n)$}
The derived subalgebra $[\mathfrak{n}, \mathfrak{n}]$ is spanned by the elements
\[ \begin{cases}
      E_{j,\overline{i}} - E_{\overline{i+n},j+m} \\
      E_{j+m,\overline{i+n}} - E_{\overline{i},j} \\
      E_{l,\overline{k+n}} - E_{\overline{k+n},l+m} \\
      E_{\overline{i}, \overline{i+n}} \\
      E_{\overline{i},\overline{k+n}}+E_{\overline{k},\overline{i+n}}\\
      E_{\overline{i},\overline{k}}-E_{\overline{k+n},\overline{i+n}}\\
      E_{j,l+m}-E_{l,j+m}\\
      E_{jl}-E_{l+m,j+m},\\
   \end{cases}
\]
where $1 \leq i,k \leq n$ and $1 \leq j,l \leq m$ and $j-i \geq 2$.
The quotient by this subalgebra consists of root vectors solely with the corresponding weights $\epsilon_j-\delta_{j+1}$, $\delta_j-\epsilon_{j+1}$, $\epsilon_m+\delta_n$, $2\delta_n$, $\delta_i-\delta_{i+1}$, and $\epsilon_i-\epsilon_{i+1}$.  As a result, $\H^1(\mathfrak{n}, \mathbb{C})$ has dimension $$ 2(m-1)+2(n-1)+2=2m+2n-2.$$

\subsubsection{$\mathfrak{osp}(2m+1|2n)$}
The only difference in terms of dimension between this and the preceding case is the existence of a root in $\mathfrak{n}$ not found in $[\mathfrak{n}, \mathfrak{n}]$.  Thus, the dimension calculation may proceed in essentially the same way, yielding a dimension formula of $2m+2n-1$.

\subsubsection{$\mathfrak{q}(n)$}
Much like in the case of $\mathfrak{gl}(m|n)$, if $\mathfrak{g} = \mathfrak{q}(n)$, then $[\mathfrak{n}, \mathfrak{n}]$ is spanned by the matrices:

\[ \begin{cases} 
      \overline{E}_{{i},{j}} & 1 \leq i , j \leq n, \, j-i \geq 2 \\
      \widetilde{E}_{{i},{j}} & 1 \leq i ,  j \leq n,\, j-i \geq 2 \\
   \end{cases}
\]

Hence, the dimension of $[\mathfrak{n},\mathfrak{n}]$ is $2 \cdot {n-1 \choose 2} = (n-1)(n-2)$.  As the dimension of $\mathfrak{n}$ is $2 \cdot {n \choose 2} = n(n-1)$, this implies that the dimension of $\operatorname{H}^1(\mathfrak{n}, \mathbb{C})$ is $$n(n-1)-(n-2)(n-1) = 2(n-1).$$

\subsubsection{$D(2,1,\alpha)$, $G(3)$, and $F(4)$}
For each of the exceptional superalgebras, we may look at the weights given in the table from Section 2.  As no two weights add up to a third, it follows that the bracket is 0 on $\mathfrak{n}_{\bar{1}}$ and so that $\mathfrak{n}_{\bar{1}}$ is abelian, and so $\mathfrak{n} \cong \mathfrak{n}/[\mathfrak{n}, \mathfrak{n}]$.

\section{$\operatorname{H}^2(\mathfrak{n}, \mathbb{C})$ Cohomology}

\subsection{Central Extensions}
As in the case of $\operatorname{H}^1(\mathfrak{n}, \mathbb{C})$, the classical Lie algebra interpretation of equivalence classes of extensions extends to the superalgebra case.  On the cochain complex $C^n(\mathfrak{g},M)$ we set the following $\mathbb{Z}_2$ grading:
$$C^n(\mathfrak{g},M)_{\alpha} = \{f \in \Hom(\Lambda^n_s(\mathfrak{g},M)  | f(\Lambda^n(\mathfrak{g}))_{\beta} \}\subseteq M_{\alpha+\beta},$$
where $\alpha$ and $\beta$ are elements of $\mathbb{Z}_2$. As the differential map preserves this grading, this gives rise to a $\mathbb{Z}_2$ grading on $\operatorname{H}^n(\mathfrak{g},M)$ as well.

If $M$ is a $\mathfrak{g}$-module, regarding $M$ as an abelian superalgebra, we say that $\mathfrak{h}$ is an extension of $\mathfrak{g}$ by $M$ if there is an exact sequence of $\mathfrak{g}$-modules:
$$0 \rightarrow M \rightarrow \mathfrak{h} \rightarrow \mathfrak{g} \rightarrow 0,$$
where $\mathfrak{h}$ is a Lie superalgebra.
Two such extensions are said to be equivalent if there is a commutative diagram

\[ \begin{tikzcd}
0 \arrow{r}& M \arrow{r}{\varphi} \arrow[swap]{d}{id} & \mathfrak{h} \arrow{r} \arrow{d} & \mathfrak{g} \arrow{d}{id} \arrow{r} & 0 \\%
0 \arrow{r} &M \arrow{r}{\varphi_f}& \mathfrak{h} \arrow{r}& \mathfrak{g} \arrow{r} &0
\end{tikzcd}.
\]
Given an even cocycle $h$, we define the extension $E_h$ via the short exact sequence
$$0 \rightarrow M \rightarrow \mathfrak{g} \oplus M \rightarrow \mathfrak{g} \rightarrow 0,$$
where the product in $\mathfrak{g} \oplus M$ is given by
$$[(x,m),(y,n)] = ([x,y],xn-(-1)^{|m||y|}ym + h(x,y)).$$

Every extension will be equivalent to $E_h$ for some even cocycle $h$.  Moreover, one can show that two extensions $E_h$ and $E_{h'}$ are equivalent if and only if there is some even linear map $f: \mathfrak{g} \to M$ such that $df = h - h'$, and thus the equivalence classes of extensions are in one-to-one correspondence with $\operatorname{H}^2(\mathfrak{g},M)_{\bar{0}}$ \cite[Section 16.4]{M}.

\subsection{Computing $\operatorname{H}^2$}
Computing the $\operatorname{H}^2(\mathfrak{n}, \mathbb{C})$ cohomology involves a term mixing together both odd and even elements, and thus requires much more care than the $\operatorname{H}^1$ case.  The main idea will be to compute the dimension of these groups recursively.  For simplicity's sake, let us restrict our attention to $\mathfrak{g}=\mathfrak{gl}(n|n)$, and let $\mathfrak{n}(n)$ denote the corresponding nilpotent radical.  From the collapsing of Hochschild-Serre spectral sequence, we have that:
\begin{equation}\label{e:directsum}
\operatorname{H}^2(\mathfrak{n}(n), \mathbb{C}) \cong \operatorname{H}^0(\mathfrak{n}(n)/\mathfrak{I}, \operatorname{H}^2(\mathfrak{I}, \mathbb{C})) \oplus  \operatorname{H}^1(\mathfrak{n}(n)/\mathfrak{I}, \operatorname{H}^1(\mathfrak{I}, \mathbb{C})) \oplus  \operatorname{H}^2(\mathfrak{n}(n)/\mathfrak{I}, \operatorname{H}^0(\mathfrak{I}, \mathbb{C})),
\end{equation}
where $\mathfrak{I}$ is the ideal described in Section 3.  As $\mathfrak{I}$ is abelian, the cohomology groups $\operatorname{H}^n(\mathfrak{I}, \mathbb{C})$ can be easily computed.  Additionally, there is a natural isomorphism between $\mathfrak{n}(n)/\mathfrak{I}$ and $\mathfrak{n}(n-1)$.  Thus, in the above decomposition, the first term can be computed directly, viewing it as the set of fixed points of $\operatorname{H}^2(\mathfrak{I},\mathbb{C})$ under the action of $\mathfrak{n}(n-1)$, and the third can be computed recursively.  Thus, the main issue is the computation of $\operatorname{H}^1(\mathfrak{n}(n)/\mathfrak{I}, \operatorname{H}^1(\mathfrak{I}, \mathbb{C}))$, which is isomorphic to $\operatorname{H}^1(\mathfrak{n}(n-1), \mathfrak{I}^*)$.
\subsection{Low-Dimension Examples}
As an example where all of the computations are relatively straightforward, let us first consider the case of $\mathfrak{gl}(2|2)$ where we wish to compute $\operatorname{H}^2(\mathfrak{n}(2),\mathbb{C})$.  As $\mathfrak{n}(2)$ is abelian, all of the differentials in the cochain complex
$$C^0 \rightarrow C^1 \rightarrow C^2 \rightarrow \cdots$$
are 0, where $C^i \cong \Lambda_s^i(\mathfrak{n}(2)^*)$.  As such, for any $i$, $\operatorname{H}^i(\mathfrak{n}(2), \mathbb{C}) \cong \Lambda_s^i(\mathfrak{n}(2)^*)$.  In particular,
$$\operatorname{H}^2(\mathfrak{n}, \mathbb{C}) \cong \Lambda_s^2(\mathfrak{n}^*) \cong \bigoplus_{i+j=2}\Lambda^i(\mathfrak{n}_{\bar{0}}) \otimes \operatorname{S}^j(\mathfrak{n}_{\bar{1}}).$$
Using the formulas for the dimensions of exterior and symmetric algebras on a vector space of dimension $n$, namely
$$\dim \Lambda^i(V) = \binom{n}{i}$$
and $$\dim \operatorname{S}^j(V) = \binom{n+j-1}{j},$$
we obtain $$\dim \operatorname{H}^2(\mathfrak{n}, \mathbb{C}) = \dim \Lambda_s^2(\mathfrak{n}) = 1 \cdot 3 + 2 \cdot 2 + 1 \cdot 1 = 8.$$
Now consider the case where $\mathfrak{g} = \mathfrak{gl}(3|3)$, and we wish to compute $\operatorname{H}^2(\mathfrak{n}(3),\mathbb{C})$.
Letting $\mathfrak{n}$ denote $\mathfrak{n}(2)$, note that as $\mathfrak{n}$ is abelian, $\mathfrak{n}_{\bar{0}}$ is an ideal of $\mathfrak{n}$, and so we obtain a short exact sequence
$$0 \rightarrow \mathfrak{n}_{\bar{0}} \rightarrow \mathfrak{n} \rightarrow \mathfrak{n}_{\bar{1}} \rightarrow 0.$$
This gives rise to a second Hochschild-Serre spectral sequence:
$$\overbar{E^{i,j}_2} = \operatorname{H}^i(\mathfrak{n}_{\bar{1}}, \operatorname{H}^j(\mathfrak{n}_{\bar{0}}, \mathfrak{I}_{\bar{0}}^* \otimes \mathfrak{I}_{\bar{1}}^*)) \Rightarrow \operatorname{H}^{i+j}(\mathfrak{n}, \mathfrak{I}_{\bar{0}}^* \otimes \mathfrak{I}_{\bar{1}}^*).$$
Again appealing to an argument with weights, the differential $d^2$ sends  $\overbar{E^{0,1}_2}$ to 0.  As the spectral sequence is in the first quadrant, all subsequent differential must do the same.
Thus, we have that
$$\operatorname{H}^1(\mathfrak{n},\mathfrak{I}_{\bar{0}}^* \otimes \mathfrak{I}_{\bar{1}}^*) \cong \overbar{E^{0,1}_2} \oplus \overbar{E^{1,0}_2}$$ 
As $\overbar{E^{1,0}_2} = \operatorname{H}^1(\mathfrak{n}_{\bar{1}}, \operatorname{H}^0(\mathfrak{n}_{\bar{0}}, \mathfrak{I}_{\bar{0}}^* \oplus \mathfrak{I}_{\bar{1}}^*)) = \operatorname{H}^1(\mathfrak{n}_{\bar{1}}, \mathbb{C}^{\oplus4})$, we can simplify this as $\operatorname{H}^1(\mathfrak{n}_{\bar{1}})^{\oplus4}$.  As $\mathfrak{n}_{\bar{1}}$ is abelian of dimension 2, $\overbar{E^{1,0}_2}$ must have dimension 8.
On the other hand, $\overbar{E^{0,1}_2} \cong \operatorname{H}^0(\mathfrak{n}_{\bar{1}}, \operatorname{H}^1(\mathfrak{n}_{\bar{0}}, \mathfrak{I}^*))$.  However, as $\mathfrak{n}_{\bar{0}}$ is a classical Lie algebra, by Kostant's theorem,
$$\operatorname{H}^1(\mathfrak{n}_{\bar{0}}, \mathfrak{I}^*) \cong \bigoplus_{l(w)=1, \, j \in J} w \cdot \lambda_j,$$
where $w$ is an element of the Weyl group of $\mathfrak{n}_{\bar{0}}$ and $\mathfrak{I}^* = \bigoplus_{j \in J} \operatorname{L}(\lambda_j)$ as a direct sum of $\mathfrak{n}_{\bar{0}}$ modules.  (Viewing $\mathfrak{I}^*$ as an $\mathfrak{sl}(2) \times \mathfrak{sl}(2)$-module shows it is isomorphic to $\operatorname{L}((1,0)) \oplus \operatorname{L}((1,0)) \oplus \operatorname{L}((0,1)) \oplus \operatorname{L}((0,1)).)$
As the Weyl group of $\mathfrak{n}_{\bar{0}}$ is isomorphic to  $\Sigma_2 \times \Sigma_2$, there are 2 elements of length 1, and so $\overbar{E^{0,1}_2} = \operatorname{H}^0(\mathfrak{n}_{\bar{1}}, s_\alpha \cdot \mathfrak{I}^*)$, which has dimension 4. Thus, altogether $\operatorname{H}^1(\mathfrak{n}_{\bar{0}}, \mathfrak{I}^*)$ has dimension 8, from which an easy computation shows that the dimension of the set of fixed points under the action of $\mathfrak{n}_{\bar{1}}$ is 4, which implies $\operatorname{H}^1(\mathfrak{n}, \mathfrak{I}^*)$ to have a total dimension of 12. Using the argument below, we can see that $\operatorname{H}^0(\mathfrak{n}, \Lambda_s^2(\mathfrak{I}^*))$ has dimension 8 and we already know 
$\operatorname{H}^2(\mathfrak{n}, \mathbb{C})$ has dimension 8, so altogether, this implies that $\operatorname{H}^2(\mathfrak{n}(3), \mathbb{C})$ has dimension 28.  However, the argument for computing the dimension of $\operatorname{H}^1(\mathfrak{n}, \mathfrak{I}^*)$ was only valid because $\mathfrak{n}$ was abelian.  For general $\mathfrak{gl}(n|n)$ this isn't the case, so $\mathfrak{n}_{\bar{0}}$ is not necessarily an ideal of $\mathfrak{n}$.

\subsection{Explicit Calculations}

\subsubsection{$\mathfrak{gl}(n|n)$}
Before beginning with the more general case of $\mathfrak{gl}(m|n)$, we start with the more special case of $\mathfrak{gl}(n|n)$.
As in the general case above, we may compute $\operatorname{H}^2(\mathfrak{n}, \mathbb{C})$ by means of the direct sum decomposition from the spectral sequence, i.e.,
$$\operatorname{H}^2(\mathfrak{n}, \mathbb{C}) \cong \operatorname{H}^0(\mathfrak{n}/\mathfrak{I}, \Lambda_s^2(\mathfrak{I}^*)) \oplus \operatorname{H}^1(\mathfrak{n}/\mathfrak{I}, \mathfrak{I}^*) \oplus \operatorname{H}^2(\mathfrak{n}/\mathfrak{I}, \mathbb{C}).$$
The first term can be identified with the set of fixed points of $\Lambda_s^2(\mathfrak{I}^*)$ under the action of $\mathfrak{n}/\mathfrak{I}$, i.e., all $x \in \Lambda_s^2(\mathfrak{I}^*)$ such that $(\mathfrak{n}/\mathfrak{I}) \cdot x = 0$.  This set is not particularly difficult to calculate, and we get the following result.

\begin{proposition}
For all $n$, $\operatorname{H}^0(\mathfrak{n}/\mathfrak{I}, \Lambda_s^2(\mathfrak{I}^*))$ has dimension 8.
\end{proposition}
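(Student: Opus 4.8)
The plan is to use that $\operatorname{H}^0(\mathfrak{n}/\mathfrak{I}, -)$ is the functor of $\mathfrak{n}/\mathfrak{I}$-invariants, so the proposition is the claim that $\Lambda_s^2(\mathfrak{I}^*)^{\mathfrak{n}/\mathfrak{I}}$ is $8$-dimensional. The first step is to record the $\mathfrak{n}/\mathfrak{I}$-module structure of $\mathfrak{I}$. Under the natural isomorphism $\mathfrak{n}/\mathfrak{I}\cong\mathfrak{n}(n-1)$, every basis vector $E_{\bar{i},\bar{n}},\,E_{i,n},\,E_{\bar{i},n},\,E_{i,\bar{n}}$ of $\mathfrak{I}$ has its nonzero entry in one of the two columns indexed by $\bar{n}$ and $n$, while $\mathfrak{n}(n-1)$ occupies the rows and columns indexed by $\{1,\dots,n-1\}$. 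Because the bracket with such a matrix can only alter the row index, I would show that $\mathfrak{I}\cong V\otimes U$ as $\mathfrak{n}(n-1)$-modules, where $V\cong\mathbb{C}^{(n-1)|(n-1)}$ is the natural $\mathfrak{gl}(n-1|n-1)$-module restricted to $\mathfrak{n}(n-1)$ and $U\cong\mathbb{C}^{1|1}$ is the two-dimensional column space, on which $\mathfrak{n}(n-1)$ acts trivially.

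Next I would decompose $\Lambda_s^2(\mathfrak{I}^*)=\Lambda_s^2(V^*\otimes U^*)$. Writing $S_s^2$ for the super-symmetric square (symmetric on the even part, exterior on the odd part), the super-exterior square of a tensor product splits in the category of superspaces as
$$\Lambda_s^2(V^*\otimes U^*)\cong\bigl(\Lambda_s^2 V^*\otimes S_s^2 U^*\bigr)\oplus\bigl(S_s^2 V^*\otimes\Lambda_s^2 U^*\bigr).$$
Since $U^*$ is a trivial module and both $S_s^2 U^*$ and $\Lambda_s^2 U^*$ are two-dimensional and trivial, this identifies $\Lambda_s^2(\mathfrak{I}^*)$ with $\bigl(\Lambda_s^2 V^*\bigr)^{\oplus 2}\oplus\bigl(S_s^2 V^*\bigr)^{\oplus 2}\cong\bigl(V^*\otimes V^*\bigr)^{\oplus 2}$ as $\mathfrak{n}(n-1)$-modules. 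Passing to invariants, the entire proposition collapses to the single uniform statement
$$\dim\bigl(V^*\otimes V^*\bigr)^{\mathfrak{n}(n-1)}=4,$$
whose multiplicity-two contribution yields the dimension $8$ independently of $n$.

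The heart of the argument, and the step I expect to be hardest, is this dimension count. I would argue weight space by weight space under the torus of $\mathfrak{f}$, in which the dual basis vectors $f_{\bar{i}}$ and $f_i$ of $V^*$ carry opposite weights; since the raising operators of $\mathfrak{n}(n-1)$ strictly raise the weight, every invariant is a highest weight vector. A short computation identifies the socle $(V^*)^{\mathfrak{n}(n-1)}$ as $\langle f_{\overline{n-1}},f_{n-1}\rangle$, and the claim is that the four tensors in $\langle f_{\overline{n-1}},f_{n-1}\rangle^{\otimes 2}$ exhaust the invariants. The decisive case is the weight-zero space, spanned by the $2(n-1)$ vectors $f_{\bar{i}}\otimes f_i$ and $f_i\otimes f_{\bar{i}}$: applying the even operators $E_{\bar{j},\bar{k}}$ and $E_{j,k}$ forces every component with $j<n-1$ to vanish, leaving exactly the two extremal tensors. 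The genuinely delicate point is that candidate invariants assembled from the even part alone, such as $f_{\overline{n-1}}\wedge f_{\overline{n-2}}$, fail to be invariant once the odd raising operators $E_{\bar{j},k}$ and $E_{j,\bar{k}}$ are applied, since these produce nonzero cross terms mixing $\mathfrak{I}_{\bar{0}}^*$ and $\mathfrak{I}_{\bar{1}}^*$. The bulk of the work is therefore the careful bookkeeping of these odd operators, verifying that across all remaining weights --- in particular the two weights contributing the tensors $f_{\overline{n-1}}\otimes f_{\overline{n-1}}$ and $f_{n-1}\otimes f_{n-1}$, and every other weight, which contributes nothing --- the joint kernel of the raising operators is precisely four-dimensional.
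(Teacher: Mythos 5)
Your structural reduction in the first half is correct, and it is a genuinely different route from the paper's. The identification $\mathfrak{I}\cong V\otimes U$ with $U\cong\mathbb{C}^{1|1}$ trivial, the splitting
$$\Lambda_s^2(V^*\otimes U^*)\cong\bigl(\Lambda_s^2 V^*\otimes S_s^2 U^*\bigr)\oplus\bigl(S_s^2 V^*\otimes\Lambda_s^2 U^*\bigr),$$
and the resulting isomorphism $\Lambda_s^2(\mathfrak{I}^*)\cong\bigl(V^*\otimes V^*\bigr)^{\oplus 2}$ of $\mathfrak{n}/\mathfrak{I}$-modules all hold, and they explain the answer conceptually as $8=2\cdot 4$, uniformly in $n$. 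The paper instead works directly inside $\Lambda_s^2(\mathfrak{I}^*)$: since its standard basis vectors have pairwise distinct weights under the diagonal torus, invariance can be tested one basis vector at a time; the supercommutator identity shows that exactly the four functionals $E_{n-1,n}^*$, $E_{\overline{n-1},n}^*$, $E_{n-1,\overline{n}}^*$, $E_{\overline{n-1},\overline{n}}^*$ are killed by all of $\mathfrak{n}/\mathfrak{I}$, and the invariants are the superexterior products of pairs of these, giving $1+2\cdot 2+3=8$. Your target statement $\dim\bigl(V^*\otimes V^*\bigr)^{\mathfrak{n}(n-1)}=4$ is true and amounts to the same count, so the two routes do converge.

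However, the argument you sketch for that count --- the step you yourself call the heart --- rests on an incorrect weight computation. The dual basis vectors $f_{\bar i}$ and $f_i$ do not carry opposite weights under any torus compatible with the rest of your setup: under the full diagonal torus of $\mathfrak{gl}(n-1|n-1)_{\bar 0}$ their weights are $-\epsilon_i$ and $-\delta_i$, and under the torus of the detecting subalgebra $\mathfrak{f}$ (on which $\epsilon_i$ and $\delta_i$ restrict to the same functional) they carry the \emph{same} weight. In either case every weight of $V^*\otimes V^*$ is a sum of two nonzero negative weights, so $V^*\otimes V^*$ has no weight-zero space at all, and the $2(n-1)$-dimensional ``decisive case'' spanned by the $f_{\bar i}\otimes f_i$ and $f_i\otimes f_{\bar i}$ does not exist. (A torus making $f_{\bar i}$ and $f_i$ opposite, such as $\operatorname{diag}(a,-a)$, would destroy your other premise: half of the operators in $\mathfrak{n}(n-1)$ would then lower weights, and invariants would no longer be forced to be highest weight vectors.) The conclusion survives, and the repair is routine: under the full diagonal torus each weight space of $V^*\otimes V^*$ is spanned by at most the two vectors $f_a\otimes f_b$ and $f_b\otimes f_a$; if, say, $a\notin\{\overline{n-1},n-1\}$, choose a matrix unit $E_{a,c}\in\mathfrak{n}(n-1)$ with $c\notin\{a,b\}$ (possible since the index of $a$ is at most $n-2$, leaving at least two candidates for $c$). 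This operator sends $f_a$ to $\pm f_c$ and kills $f_b$, hence maps $c_1 f_a\otimes f_b+c_2 f_b\otimes f_a$ to $\pm c_1 f_c\otimes f_b\pm c_2 f_b\otimes f_c$, which vanishes only when $c_1=c_2=0$. This eliminates every weight space except the four tensors built from $f_{\overline{n-1}}$ and $f_{n-1}$, confirming dimension $4$, hence $8$ overall.
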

\begin{proof}
Note that if $a \in \mathfrak{n}/\mathfrak{I}$ and $x \in \Lambda_s^2(\mathfrak{I}^*)$ are weight vectors of weights $\lambda$ and $\mu$,  then $a \cdot x$ has weight $\lambda+\mu$, and so $a$ sends distinct weight spaces to distinct weight spaces.  In particular, if $x_1 + \cdots + x_n$ is a sum of weight vectors of distinct weights in $\Lambda_s^2(\mathfrak{I}^*)$ and $a \cdot (x_1 + \cdots + x_n)=0$, then $a\cdot x_i$ must equal $0$ for all $i$.  Since the standard basis for $\Lambda_s^2(\mathfrak{I}^*)$ consists of root vectors all of distinct weights, it suffices to look at which basis elements are sent to 0 by $\mathfrak{n}/\mathfrak{I}$.

$\mathfrak{I}^*$ has a basis given by $E_{i,n}^*$, $E_{\overline{i},n}^*$, $E_{i,\overline{n}}^*$, and $E_{\overline{i},\overline{n}}^*$, for $1 \leq i \leq n-1$.  Based on the supercommutator identity, if $E_{i,j}$ is in $ \mathfrak{n}/\mathfrak{I}$ and $E_{k,n}^*$ or $E_{k, \overline{n}}^*$ is in $\mathfrak{I}^*$, $E_{i,j} \cdot E_{k,n}^*$ doesn't vanish precisely when $i=k$.  In particular, as there are no elements $E_{i,j}$ in $\mathfrak{n}/\mathfrak{I}$ where $i=n-1$ or $\overline{n-1}$, it is precisely the basis elements $E_{n-1,n}^*$, $E_{\overline{n-1},n}^*$, $E_{n-1,\overline{n}}^*$, and $E_{\overline{n-1},\overline{n}}^*$ that are sent to 0 for all $a \in \mathfrak{n}/\mathfrak{I}$.  Any element of $\Lambda_s^2(\mathfrak{I}^*)$ that is sent to 0 is the superexterior product of two such basis elements of $\mathfrak{I}^*$, and as there are two even and two odd such basis elements, viewing $\Lambda^2_s(\mathfrak{I}^*)$ as $\Lambda^2(\mathfrak{I}_{\bar{0}}^*) \oplus (\mathfrak{I}^*_{\bar{0}} \otimes \mathfrak{I}^*_{\bar{1}}) \oplus \operatorname{S}^2(\mathfrak{I}^*_{\bar{1}})$, the total dimension of $\operatorname{H}^0(\mathfrak{n}/\mathfrak{I}, \Lambda_s^2(\mathfrak{I}^*))$ is $1 +2 \cdot 2 + 3=8$.
\end{proof}

Moreover, the third term may be computed recursively, using the fact that $\mathfrak{n}/\mathfrak{I}$ is isomorphic to $\mathfrak{n}$ from $\mathfrak{gl}(n-1|n-1)$.  Thus, it remains to compute the middle term.

Let us consider the cochain complex
$$ C^0 \rightarrow C^1 \rightarrow C^2 \rightarrow \cdots,$$
where $C^i \cong \Lambda_s^i(\mathfrak{n}/\mathfrak{I}^*) \otimes \mathfrak{I}^*$ and where the differentials are as in the introduction.  Then the middle term $\operatorname{H}^1(\mathfrak{n}/\mathfrak{I}, \mathfrak{I}^*)$ is given by the cohomology of the complex at $C^1$.  Since the differentials preserve the action of the torus, it follows that we may break up $C^i$ into its weight spaces.  The weights of $(\mathfrak{n}/\mathfrak{I})^*$ are of the form $\alpha_j - \beta_k$, where $\alpha$ and $\beta$ correspond to either $\epsilon$ or $\delta$, and $k<j<n$.  The weights of $\mathfrak{I}^*$ are of the form $\alpha'_n - \beta'_i$, where $i<n$.  All weights of $C^i$ will be sums of weights of these forms.  Actually, using the fact that the cohomology will be a subquotient of $\mathfrak{n}/\mathfrak{I}/[\mathfrak{n}/\mathfrak{I}, \mathfrak{n}/\mathfrak{I}]^* \otimes \mathfrak{I}^*$, we need only consider those weights of $(\mathfrak{n}/\mathfrak{I})^*$ of the form $\alpha_{j+1} - \beta_j$.  As a shorthand, given a weight $\alpha_i-\beta_j$, we let $F_{i,j}$ and $G_{i,j}$ denote the basis vector of $(\mathfrak{n}/\mathfrak{I})^*$ of weight $\alpha_i-\beta_j$, or more explicitly:

\[ F_{i,j}, \, G_{i,j} =\begin{cases} 
      E_{\bar{j},\bar{i}}^* & \alpha=\epsilon, \, \beta=\epsilon \\
      E_{{j},{i}}^* & \alpha=\delta, \, \beta=\delta \\
      E_{{j},\bar{i}}^* & \alpha=\epsilon, \, \beta=\delta \\
     E_{\bar{j},{i}}^* & \alpha=\delta, \, \beta=\epsilon. \\
   \end{cases}
\]

\begin{proposition}
The dimension for a weight space of $C^1$ is at most 2.
\end{proposition}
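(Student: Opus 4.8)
The plan is to read off the dimension of each weight space of $C^1 \cong (\mathfrak{n}/\mathfrak{I})^* \otimes \mathfrak{I}^*$ directly from a count of weight vectors, exploiting that every relevant weight space of each tensor factor is one-dimensional. By the reduction in the preceding paragraph, I may assume the $(\mathfrak{n}/\mathfrak{I})^*$-factor contributes only weights of the form $\alpha_{j+1}-\beta_j$ (with $\alpha,\beta \in \{\epsilon,\delta\}$ and $j+1 \leq n-1$), and each such weight is afforded by a single basis vector ($F_{j+1,j}$ in the notation above). The $\mathfrak{I}^*$-factor contributes weights $\gamma_n - \rho_l$ with $l < n$, again each one-dimensional. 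Hence every basis vector of $C^1$ is a tensor of weight
$$\nu = \gamma_n + \alpha_{j+1} - \beta_j - \rho_l,$$
and the dimension of the $\nu$-weight space equals the number of tuples $(\gamma,\alpha,\beta,\rho,j,l)$ producing it. The problem is therefore purely combinatorial.

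First I would isolate the index-$n$ term. Since neither $\alpha_{j+1}$ nor $\beta_j$ (indices $\leq n-1$) nor $\rho_l$ (index $l<n$) involves index $n$, the coefficient of $\nu$ at index $n$ equals exactly $+\gamma_n$. Thus a nonzero weight space forces $\nu$ to carry a single $+1$ at index $n$, and this determines $\gamma$ outright. It then remains to count solutions of
$$\nu' := \nu - \gamma_n = \alpha_{j+1} - \beta_j - \rho_l,$$
supported on indices $1,\dots,n-1$, where $\alpha_{j+1}$ is the positive and $\beta_j,\rho_l$ the negative contributions.

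Next I would run a short case analysis according to how the three indices $j+1$, $j$, $l$ collide. When they are pairwise distinct, $\nu'$ has its unique positive coefficient at index $j+1$, which pins down $j+1$, then $j=(j+1)-1$, and $\alpha$; the mandatory negative term at index $j$ fixes $\beta$, and the remaining negative term fixes $(\rho,l)$, so the tuple is unique and the weight space is $1$-dimensional. The only ways to obtain a second solution are the two coincidence patterns. If $l=j+1$ with $\rho=\alpha$, the index-$(j+1)$ contributions cancel and $\nu'=-\beta_j$ is a single negative term, which is realized by exactly the two choices $\alpha=\rho\in\{\epsilon,\delta\}$. If instead $l=j$, then at index $j$ one reads $-\beta_j-\rho_j$, and when the two types differ this double negative splits as $(\beta,\rho)=(\epsilon,\delta)$ or $(\delta,\epsilon)$, again exactly two tuples. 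In every case the count is at most $2$.

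The bookkeeping of these coincidences is the only real obstacle: I must verify that no configuration yields three or more tuples, which reduces to checking that the coefficient-patterns produced by the four cases (three distinct indices; $l=j+1$; $l=j$; and their mixed-type variants) are mutually exclusive, so that a given $\nu'$ falls under exactly one pattern and each pattern admits at most the two solutions identified above. I would also emphasize that the reduction to weights $\alpha_{j+1}-\beta_j$ is indispensable here: allowing a general weight $\alpha_j-\beta_k$ of $(\mathfrak{n}/\mathfrak{I})^*$ would let the positive term $\alpha_j$ cancel against $\rho_l$ for \emph{any} admissible index $l$, producing weight spaces of unbounded dimension and defeating the bound.
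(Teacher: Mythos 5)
Your proposal is correct and follows essentially the same route as the paper: both arguments rest on the unique expression of a weight in the linearly independent $\epsilon_i$'s and $\delta_j$'s, isolate the index-$n$ contribution coming from the $\mathfrak{I}^*$ factor, and then run a case analysis on the cancellation/coincidence patterns among the remaining three terms, with exactly the two patterns you identify ($l=j+1$ with matching types, and $l=j$ with opposite types) producing two-dimensional weight spaces. If anything, your fiber-counting organization is slightly more exhaustive than the paper's two-case dichotomy, which silently skips the (harmless, one-dimensional) coincidence where the two negative terms agree, i.e.\ $\rho_l=\beta_j$ with $l=j$ and equal types.
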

\begin{proof}
Suppose two basis vectors for $C^1$, $F_{j+1,j} \otimes G_{n,k}$ with weight $(\alpha_{j+1}-\beta_j)+(\alpha'_n-\beta'_k)$ and  $F'_{l+1,l} \otimes G'_{n,m}$ with weight  $(\zeta_{l+1}-\eta_{l})+(\zeta'_n-\eta'_m)$ actually had the same weight.  As the $\epsilon_i$, $\delta_j$ are linearly independent, any weight has a unique representation as a sum of $\epsilon_i$'s and $\delta_j$'s.  This leads to two cases:
\begin{enumerate}
    \item If $\alpha_{j+1}$, $\beta{j}$, $\alpha'_n$, and $\beta'_k$ are all distinct, these must be, in some order, the same weights as $\zeta_{l+1}$, $\eta_{l}$, $\zeta'_n$, and $\eta'_m$.  Since $l+1<n$, it follows that $\zeta'_n=\alpha'_n$ and $\alpha_{j+1}=\zeta_{l+1}$, so $j=l$.  Thus, either $\eta_{l}$ equals either $\beta_j$ or $\beta'_k$, which leads to two possible basis vectors of the same weight, giving a total dimension of at most 2.
    \item If $\alpha_{j+1}=\beta'_k$, then $\zeta'_n=\alpha'_n$, $\eta_l=\beta_l$ and $\zeta_{l+1}=\eta'_m$.  Since this forces $l+1$ to equal $j+1$ and $m$ to equal $l+1$, $\zeta_{l+1}$ can equal only $\epsilon_{j+1}$ or $\delta_{j+1}$, which yields at most two basis vectors.
\end{enumerate}
\end{proof}

With this in mind, we aim to determine the dimension of the image of $d^0$ and kernel of $d^1$.  To do this, we will determine which weights appear in both $C^0$ and $C^1$ and which appear in $C^1$ but not $C^2$.  For the former calculation, to calculate the dimension of the image of $d^0$, first note that since its image is in $C^1$, the differential defined in Equation~\ref{e:differential} simplifies to
\begin{equation*}
\begin{aligned}
d^0f(\omega_0){} =  (-1)^{\tau_i}\omega_0 \cdot f(1), \\
\end{aligned}
\end{equation*}
where a function $f: \mathbb{C} \rightarrow \mathfrak{I}^*$ is identified with an element of $\mathfrak{I}^*$ via the map sending $f$ to $f(1)$.  What this means is that so long as there exists an element $x$ of $\mathfrak{n}\mathfrak{I}$ such that $x \cdot f(1) \neq 0$, then $d^0$ does not map $f$ to $0$.  If $f(1) \in \mathfrak{I}^*$ and $x \in \mathfrak{n}/\mathfrak{I}$ are nonzero weight vectors, this condition holds if the sum of the weights of $f(1)$ and $x$ is again a weight of $\mathfrak{I}^*$.A weight $\alpha'_n-\beta'_k$ of a basis vector $G_{n,k}$ of $\mathfrak{I}^*$ may be written as a weight in $C^1$ precisely when $k<n-1$.  In particular, $G_{n,k}$ will map to an element in the linear span of the root vectors $F'_{k+1, k} \otimes G'_{n,k+1}$ and $F''_{k+1,k} \otimes G''_{n,k+1}$ corresponding to $(\epsilon_{k+1}-\beta'_k)+(\alpha'_n-\epsilon_{k+1})$ and $(\delta_{k+1}-\beta'_k)+(\alpha'_n-\delta_{k+1})$, respectively.  
Since the differential preserves weights, and $\mathfrak{I}^*$ has $4(n-1)-4=4(n-2)$ weights of the above form, the dimension of the image of $d^0$ is $4 \cdot (n-2).$

To compute the dimension of the kernel, we rely heavily on the differential defined in Equation~\ref{e:differential} and note that a generic weight will be of the form $\alpha_{j+1}-\beta_j +\alpha'_n-\beta'_i$, where $j<n-1$.  So long as $i<n-1$, this weight may be written as $(\alpha'_n-\alpha'_{i+1})+(\alpha'_{i+1}-\beta'_i)+(\alpha_{j+1}-\beta_j)$, and so the differential will send the weight vector corresponding to $(\alpha_{j+1}-\beta{j})+(\alpha'_n-\beta'_i)$ to a nonzero element of $C^2$.  Thus the only weight vectors in the kernel have $\mathfrak{I}^*$ component with $i=n-1$.  There are four basis elements of $\mathfrak{I}^*$ with $i=n-1$ and there are $4(n-2)$ basis elements of $(\mathfrak{n}/\mathfrak{I}/[\mathfrak{n}/\mathfrak{I},\mathfrak{n}/\mathfrak{I}])^*$, so the one-dimensional weight spaces in the kernel contribute total dimension $4(n-2) \cdot 4 = 16(n-2)$. Note, however, that none of these elements are in the image of $d^0$.  Besides those corresponding to weights $\alpha'_n-\beta'_{n-1}$, which are already included in the span of the root vectors listed above, each of these adds 1 more dimension to the kernel.  As there are $4(n-3)$ such elements, this gives the kernel a total dimension of at least $20(n-2)-4$.

To show that no other elements are in the kernel, let $F_{j+1,j} \otimes G_{n,j}$ of weight  $(\alpha_{j+1}-\epsilon_j)+(\alpha'_n-\delta_j)$ and
$F'_{j+1,j} \otimes G'_{n,j}$ of weight $(\alpha_{j+1}-\delta_j)+(\alpha'_n-\epsilon_j)$ be two basis vectors of the same weight, where $j<n-1$.  Identify these basis elements with functions $f$ and $g$ from $\mathfrak{n}/\mathfrak{I}$ to $\mathfrak{I}$.  Using the action of the differential, we see that $df$ will send the element $F_{j+1, j} \wedge H_{j+1,j}$ of weight $\alpha_{j+1}-\epsilon_j + \beta_{j+1}-\delta_j$ to a root vector of weight $\alpha'_n-\beta_{j+1}$, where $\beta$ is either $\epsilon$ or $\delta$, depending on what $\alpha$ is not.  However, $dg$ will send the same element to 0.  Similarly, $dg$ will send $F'_{j+1,j} \wedge \bar{H}_{j+1,j}$ of weight $\alpha_{j+1} -\delta_j + \beta_{j+1}-\epsilon_j$ to $\alpha'_n-\beta_{j+1}$ while $df$ sends the same element to 0.  As $df$ and $dg$ are nonzero on different subsets of the basis elements, it follows that they must be linearly independent, and hence there is no nontrivial linear combination of $df$ and $dg$ equal to 0.  Since $f$ and $g$ span their weight space, any other nonzero element of that weight space gets mapped to a linear combination of $df$ and $dg$, and so cannot be mapped to 0 and is thus not in the kernel.  Therefore, any weight of the form $(\alpha_{j+1}-\epsilon_j)+(\alpha'_n-\delta_j)$ does not appear in the kernel and thus the kernel must have dimension of exactly $20(n-2)-4$, and so the dimension of the first cohomology is $$ \dim \ker{d^1}-\dim \im{d^0} = 20(n-2)-4-4(n-2)=16(n-2)-4.$$
Combining this with the fact that the first term in the direct sum decomposition above has dimension 8, we have that when $n>2$, the dimension of $\operatorname{H}^2(\mathfrak{n}, \mathbb{C})$ equals
$$8+ \sum_{i=3}^n(16(i-2)-4+8),$$
which simplifies to
$$8+ 16\sum_{i=3}^n (i - 28) = 8+8(n^2+n)-48-28(n-2) = 8n^2-20n+16.$$

\subsubsection{$\mathfrak{gl}(m|n)$}

We now proceed to the general case of $\mathfrak{gl}(m|n)$, where we assume that $m > n \geq 2$.  Note that in this case the ideal $\mathfrak{I}$ is defined slightly differently from how it is in the case where $m=n$, leading $\mathfrak{n}/\mathfrak{I}$ being isomorphic to the $\n$ from $\mathfrak{gl}(m-1|n)$.  Thus, using the spectral sequence decomposition
$$\operatorname{H}^2(\mathfrak{n}, \mathbb{C}) \cong \operatorname{H}^0(\mathfrak{n}/\mathfrak{I}, \Lambda_s^2(\mathfrak{I}^*)) \oplus \operatorname{H}^1(\mathfrak{n}/\mathfrak{I}, \mathfrak{I}^*) \oplus \operatorname{H}^2(\mathfrak{n}/\mathfrak{I}, \mathbb{C})$$
we can compute $\H(\n,\mathbb{C})$ recursively, working our way up from the $\n$ corresponding to $\mathfrak{gl}(n|n)$.

From here, the principals behind the computation are largely the same as in the $\mathfrak{gl}(n|n)$ case, where $\operatorname{H}^0(\mathfrak{n}/\mathfrak{I}, \Lambda_s^2(\mathfrak{I}^*))$ is computed by looking
at the fixed points of $\Lambda_s^2(\mathfrak{I}^*)$ and $\operatorname{H}^1(\mathfrak{n}/\mathfrak{I}, \mathfrak{I}^*)$ is computed by observing how the differentials act on weights.  Putting this all together, we obtain the following formulas for the dimension of $\H^2(\n, \mathbb{C})$ corresponding to $\mathfrak{gl}(n+\rho|n)$:

$$\dim{\H^2(\n, \mathbb{C})}=
\begin{cases}
8n^2-12n+8, & \rho=1 \\
8n^2-8n+8, & \rho=2 \\
8n^2-8n+8+4n(\rho-2)+\frac{(\rho-3)^2+(\rho-3)}{2}, & \rho >2. \\
\end{cases}$$
\subsubsection{$\mathfrak{q}(n)$}

The calculation of the dimension of the second cohomology for $\mathfrak{q}(n)$ is similar to that for $\mathfrak{gl}(n|n)$.  Note first that when $n = 2$, $\mathfrak{n}$ is a 2-dimensional, abelian Lie superalgebra, and so the $i$th cohomology will be isomorphic to $\Lambda^i_s(\mathfrak{n}^*)$.  Since both $\n^*_{\bar{0}}$ and $\n^*_{\bar{1}}$ have dimension 1, $\Lambda^i(\n^*_{\bar{0}}) = 0$ for all $i>0$ and $\operatorname{S}^j(\n^*_{\bar{1}})$ has dimension 1 for all $j$, so $\Lambda^i_s(\mathfrak{n}^*)$ is 2-dimensional for all $i$.
For general $n$, we may use the same direct sum decomposition derived from the spectral sequence as in  Equation~\ref{e:directsum}.

To compute $\operatorname{H}^0(\mathfrak{n}/\mathfrak{I}, \Lambda^2_s(I^*))$, which corresponds to fixed points of $\Lambda^2_s(I^*)$ under the action of $\mathfrak{n}/\mathfrak{I}$, note that again the only weight vectors of $\Lambda^2_s(I^*)$ that will vanish under the action of all elements $\mathfrak{n}/\mathfrak{I}$ will be superexterior products involving maximal even root and maximal odd root vectors, in particular, $\widetilde{E}_{n-1,n}^*$ and $\overbar{E}_{n-1,n}^*$.  Unlike in the $\mathfrak{gl}(n|n)$ case however, here there is only one such even root vector and one such odd root vector, so $\operatorname{H}^0(\mathfrak{n}/\mathfrak{I}, \Lambda^2_s(I^*))$ is spanned by $\widetilde{E}_{n-1,n}^* \otimes \overbar{E}_{n-1,n}^*$ and $\overbar{E}_{n-1,n}^* \otimes \overbar{E}_{n-1,n}^*$. Thus, in the case where $n>2$, the dimension of $\operatorname{H}^0(\mathfrak{n}/\mathfrak{I}, \Lambda^2_s(I^*))$ is equal to 2.

In computing the middle term $\operatorname{H}^1(\mathfrak{n}/\mathfrak{I}, \mathfrak{I}^*)$, we may again use the fact that we can decompose the terms of the cochain complex into their weight spaces, and the differentials will still preserve the action of the torus.  Again, we may look solely at weights from $(\n/[\n/\mathfrak{I},\n/\mathfrak{I}])^* \otimes I^*$ and argue as we did in the $\mathfrak{gl}(n|n)$ case.  Here, the kernel of $d^1$ will have dimension $4(n-2)+2(n-3)$ and the image of $d^0$ will have dimension $2(n-2)$, giving $\H^1(\n/\mathfrak{I}, \mathfrak{I}^*)$ a dimension of 4n-10.

Combining these terms together, we have the dimension of $\operatorname{H}^2(\mathfrak{n}, \mathbb{C})$ equals
$$2+ \sum_{i=3}^n(4i-8),$$
which simplifies to
$$2n^2-6n+6.$$

\subsubsection{$\mathfrak{osp}(2m|2n)$}

The same principles apply in computing the second cohomology for the $\mathfrak{osp}(2m|2n)$ superalgebras.  As before, we may decompose the cohomology into its direct sum decomposition as in Equation~\ref{e:directsum}.
Note that the last term is again computed recursively, starting with the base case $\mathfrak{osp}(2|2n)$. In this case, $\mathfrak{n}_{\bar{0}}$ is abelian, and so we obtain the direct sum decomposition:
$$H^2(\mathfrak{n},\mathbb{C}) \cong H^2(\mathfrak{n}_{\bar{0}},\mathbb{C}) \oplus H^1(\mathfrak{n}_{\bar{0}},\n_{\bar{1}}^*) \oplus H^2(\mathfrak{n}_{\bar{0}},\operatorname{S}^2(\n_{\bar{1}}^*)).$$
Since $\mathfrak{n}_{\bar{0}}$ is the nilpotent radical of an ordinary Lie algebra, these cohomologies may be computed via Kostant's theorem, which can be shown to sum up to have dimension $\frac{3n^2+n+4}{2}$. Using the fact for $\mathfrak{osp}(2m|2n)$, the $\mathfrak{n}/\mathfrak{I}$ is isomorphic to the $\mathfrak{n}$ from $\mathfrak{osp}(2(m-1)|n)$, the dimensions and weight space expressions for $\H^2(\mathfrak{n},\mathbb{C})$ for $m>1$ may then be computed recursively as in the case for $\mathfrak{gl}(m|n)$ and $\mathfrak{q}(n)$.  These are listed in the tables in Section 6.

\subsubsection{$\mathfrak{osp}(2m+1|2n)$}
We begin again with the direct sum decomposition from Equation~\ref{e:directsum}.
Much of the calculation is similar to that in the case of $\mathfrak{osp}(2m|2n)$.  We begin with the base case of $\mathfrak{osp}(3|2n)$ and use the recurrence from the direct sum formula to determine the weight space decomposition for any higher $\mathfrak{osp}(2m+1|2n).$

\subsubsection{$D(2,1,\alpha)$, $G(3)$, and $F(4)$}
Just as in the case of $\operatorname{H}^1(\mathfrak{n}, \mathbb{C})$, the second cohomology for $D(2, 1, \alpha)$, $G(3)$, and $F(4)$ can be easily computed using the fact that the corresponding subalgebras $\mathfrak{n}$ are abelian.  In particular, in each case $\operatorname{H}^2(\mathfrak{n}, \mathbb{C})$ is isomorphic to $C^2(\mathfrak{n}, \mathbb{C})$ in the corresponding cochain complex.  A description in terms of its weight space decomposition is given in the tables below.
\newpage

\section{Appendix: Tables of Weights and Dimensions}

In the tables below, we compile a list of all of the weights appearing in the first and second cohomologies for the Lie superalgebras used above, as well as their dimensions.  As a shorthand, we use the following notation.  For $\mathfrak{gl}(m|n)$, we let $\alpha_i$ be the weight $\epsilon_{i+1}-\epsilon_i$, $\alpha'_i$ be the weight $\delta_{i+1}-\delta_i$, $\beta_i$ be the weight $\delta_{i+1}-\epsilon_i$, and $\beta'_i$ be the weight $\epsilon_{i+1}-\delta_i$.  In the case of $\mathfrak{gl}(m|n)$, we assume that $m > n$.  In the case of $\mathfrak{osp}$, we let $\mu_1, \cdots \mu_m$ denote the simple weights of $B_m$ or $D_m$, and let $\nu_1, \cdots, \nu_n$ be the simple weights of $C_n$.

\subsection{$\operatorname{H}^1(\mathfrak{n}, \mathbb{C})$ Cohomology}

\noindent

\quad

\noindent

\quad

\begin{center}
\begin{tabular}{ |p{3.5cm}||p{3cm}|p{3cm}| }
 \hline
 \multicolumn{3}{|c|}{$\operatorname{H}^1(\mathfrak{n}, \mathbb{C})$ Cohomology (Classical Cases)} \\
 \hline
 \textbf{Lie Superalgebra} & \textbf{Corresponding Even Weights} & \textbf{Corresponding Odd Weights} \\
 \hline
  \hline
  $\mathfrak{gl}(n|n)$   & $\alpha_i$, $1 \leq i \leq n-1$, $\alpha'_j$, $1 \leq j \leq n-1$, & $\beta_i$, $1 \leq i \leq n-1$, $\beta'_j$, $1 \leq j \leq n-1$\\
  \hline
 $\mathfrak{gl}(m|n)$   & $\alpha_i$, $1 \leq i \leq m-1$, $\alpha'_j$, $1 \leq j \leq n-1$, & $\beta_i$, $1 \leq i \leq n-1$, $\beta'_j$, $1 \leq j \leq n$\\
 \hline
 $\mathfrak{osp}(2m|2n)$ &   $-\mu_i$, $1\leq i \leq m$
\linebreak
$-\nu_i$, $1\leq i \leq n$
&
$\epsilon_{i+1}-\delta_i$, $1\leq i \leq r$
\linebreak
$\delta_{i+1}-\epsilon_i$, $1\leq i \leq r$\\
 \hline
 $\mathfrak{osp}(2m+1|2n)$ &   $-\mu_i$, $1\leq i \leq m$
\linebreak
$-\nu_i$, $1\leq i \leq n$
&
$\epsilon_{i+1}-\delta_i$, $1\leq i \leq r$
\linebreak
$\delta_{i+1}-\epsilon_i$, $1\leq i \leq r$\\
 \hline
 $\mathfrak{q}(n)$ & $\epsilon_{i+1}-\epsilon_i$, \newline $1 \leq i \leq n-1$ & $\delta_{i+1}-\delta_i$, \newline $1 \leq i \leq n-1$ \\
 \hline
 \end{tabular}
\end{center}

\vfill

\begin{center}
\begin{tabular}{ |p{3.5cm}||p{3cm}|p{3.5cm}| }
 \hline
 \multicolumn{3}{|c|}{$\operatorname{H}^1(\mathfrak{n}, \mathbb{C})$ Cohomology (Exceptional Cases)} \\
 \hline
 \textbf{Lie Superalgebra} & \textbf{Corresponding Even Weights} & \textbf{Corresponding Odd Weights} \\
 \hline
  \hline
 $D(2,1,\alpha)$ & $-\mu_1$, $-\mu_2$, $-\mu_3$ & $(-\epsilon, -\epsilon, -\epsilon)$, $(-\epsilon, -\epsilon, \epsilon)$, $(\epsilon, -\epsilon, -\epsilon)$ \\
 \hline
 $G(3)$& $-\mu_1$, $-\alpha$, $-\beta$ & $(-\omega_1+\omega_2, -\epsilon)$,\newline $(2\omega_1-\omega_2, -\epsilon)$,\newline $(0, -\epsilon)$,\newline $(\omega_1-\omega_2, -\epsilon)$, \newline$(-2\omega_1+\omega_2, -\epsilon)$, \newline$(-\omega_1, -\epsilon)$ \\
 \hline
 $F(4)$ & $-\mu_1$, $-\nu_1$, $-\nu_2$, $-\nu_3$ &  $(\omega_2-\omega_3, -\epsilon)$,\newline $(\omega_1-\omega_2+\omega_3, -\epsilon)$,\newline $(\omega_1-\omega_3, -\epsilon)$, \newline$(-\omega_2+\omega_3, -\epsilon)$,\newline $(-\omega_1+\omega_2-\omega_3, -\epsilon)$,\newline $(-\omega_1+\omega_3, -\epsilon)$,\newline $(-\omega_3, -\epsilon)$ \\
 \hline
\end{tabular}
\end{center}

\pagebreak

\begin{center}
\begin{tabular}{ |p{3.5cm}||p{3cm}|p{3cm}|p{3cm}| }
 \hline
 \multicolumn{4}{|c|}{$\operatorname{H}^1(\mathfrak{n}, \mathbb{C})$ Cohomology Dimensions (Classical Cases)} \\
 \hline
 \textbf{Lie Superalgebra} & \textbf{Even} & \textbf{Odd} & \textbf{Total} \\
 \hline
  \hline
  $\mathfrak{gl}(n|n)$   & $2(n-1)$, & $2(n-1)$ & $4(n-1)$ \\
 \hline
 $\mathfrak{gl}(m|n)$   & $m-1+n-1$, & $2n-1$ & $m+3n-3$ \\
 \hline
 $\mathfrak{osp}(2m|2n)$ & $m+n-1$ & $m+n-1$ & $2m+2n-2$ \\
 \hline
 $\mathfrak{osp}(2m+1|2n)$ & $m+n$ & $2r$ & $m+n+2r$ \\
 \hline
 $\mathfrak{q}(n)$ & $n-1$ & $n-1$ & $2n-2$ \\
 \hline
 \end{tabular}
 
 \quad
 
 \quad
 
  \quad
 
 \quad
 
  \quad
 
 \quad
 
\begin{tabular}{ |p{3.5cm}||p{3cm}|p{3cm}|p{3cm}| }
 \hline
 
 \multicolumn{4}{|c|}{$\operatorname{H}^1(\mathfrak{n}, \mathbb{C})$ Cohomology Dimensions (Exceptional Cases)} \\
 \hline
  \hline
 \textbf{Lie Superalgebra} & \textbf{Even} & \textbf{Odd} & \textbf{Total} \\
 \hline
 $D(2,1,\alpha)$ & 3 & 3 & 6\\
 \hline
 $G(3)$& 3 & 6 & 9\\
 \hline
 $F(4)$ & 4 &  7 & 11 \\
 \hline
\end{tabular}
\end{center}

\pagebreak

\subsection{$\operatorname{H}^2(\mathfrak{n}, \mathbb{C})$ Cohomology}

Note that every weight in the $\operatorname{H}^2(\mathfrak{n},\mathbb{C})$ corresponds to the sum of two roots of the Lie superalgebra.  Below, we classify the weights by whether they are the sum of two even roots, two odd roots, or of an even root and and odd root.

\vfill

\begin{center}
\begin{tabular}{ |p{3.5cm}||p{4cm}|p{4cm}|p{4cm}| }
 \hline
 \multicolumn{4}{|c|}{$\operatorname{H}^2(\mathfrak{n}, \mathbb{C})$ Cohomology (Classical Cases)} \\
 \hline
 \textbf{Lie Superalgebra} & \textbf{Even+Even Weights} & \textbf{Odd+Odd Weights} & \textbf{Odd+Even Weights} \\
 \hline
  \hline
   $\mathfrak{gl}(n|n)$   & $\alpha_i+\alpha_j$,\newline $1 \leq i<j \leq n-1$, \newline $\alpha'_i+\alpha'_j$, \newline $1 \leq i<j \leq n-1$, \newline 
   $\alpha_i + \beta_j, \, \alpha_i+\beta_j'$, \newline $1 \leq i <i+1<j \leq n-1$
 \newline
 $\alpha'_i + \beta_j, \, \alpha'+\beta_j'$, \newline $1 \leq i <i+1<j \leq n-1$
 
 & $\alpha_i \pm \beta_j$, \newline $1 \leq i <i+1<j \leq r-1$
 \newline
 $\alpha'_i \pm \beta_j$, \newline $1 \leq i <i+1<j \leq r-1$
 \newline &  $\pm \beta_i \pm \beta_j$, \newline $1 \leq i <j \leq r$
 \\
 \hline
  
 $\mathfrak{gl}(m|n)$   & $\alpha_i+\alpha_j$,\newline $1 \leq i<i+1<j \leq m-1$, \newline $\alpha'_i+\alpha'_j$, \newline $1 \leq i<i+1<j \leq n-1$, \newline 
 
 & $\alpha_i \pm \beta_j$, \newline $1 \leq i <i+1<j \leq r-1$
 \newline
 $\alpha'_i \pm \beta_j$, \newline $1 \leq i <i+1<j \leq r-1$
 \newline &  $\pm \beta_i \pm \beta_j$, \newline $1 \leq i <j \leq r$
 \\
 \hline
 
  $\mathfrak{osp}(2m|2n)$ &   $\mu_i+\mu_j$, \newline $1 \leq i<i+1<j \leq m-1$, \newline $\nu'_i+\nu'_j$,\newline $1 \leq i<i+1<j \leq n-1$, \newline\ 

  &  $\mu_i \pm \delta_{j+1}-\epsilon_j$, \newline$1 \leq i <i+1<j \leq r-1$ \newline
  $\nu_i \pm \delta_{j+1}-\epsilon_j$, \newline $1 \leq i <i+1<j \leq r-1$ &  $\pm (\delta_{i+1}-\epsilon_i) \pm (\delta_{j+1}-\epsilon_j)$,  \newline $1 \leq i <j \leq r$ \\
 \hline
 
 $\mathfrak{osp}(2m+1|2n)$ & $\mu_i+\mu_j$,\newline $1 \leq i<i+1<j \leq m-1$, \newline $\nu'_i+\nu'_j$, \newline $1 \leq i<i+1<j \leq n-1$, \newline

   & $\mu_i \pm \delta_{j+1}-\epsilon_j$, \newline $1 \leq i <i+1<j \leq r-1$
 \newline $\nu_i \pm \delta_{j+1}-\epsilon_j$, \newline $1 \leq i <i+1<j \leq r-1$ & $\pm (\delta_{i+1}-\epsilon_i) \pm (\delta_{j+1}-\epsilon_j)$, \newline $1 \leq i <j \leq r$\\
 \hline
 $\mathfrak{q}(n)$ & $\alpha_i+\alpha_j$, \newline$1 \leq i<i+1<j \leq n-1$,

 &  $\alpha_i \pm \beta_j$, \newline $1 \leq i <i+1<j \leq n-1$,
 & $\pm \beta_i \pm \beta_j$,\newline  $1 \leq i <j \leq n-1$\\
 \hline
 \end{tabular}
\end{center}

\vfill

 \begin{center}
\begin{tabular}{ |p{3.5cm}||p{4cm}|p{4cm}|p{4cm}| }
 \hline
 \multicolumn{4}{|c|}{$\operatorname{H}^2(\mathfrak{n}, \mathbb{C})$ Cohomology (Exceptional Cases)} \\
 \hline
  \textbf{Lie Superalgebra} & \textbf{Even+Even Weights} & \textbf{Odd+Odd Weights} & \textbf{Odd+Even Weights} \\
 \hline
  \hline
 $D(2,1,\alpha)$ & Sums of any distinct two following weights: \linebreak $-\mu_1$, $-\mu_2$, $-\mu_3$ & Sums of one weight from left column with one from right column &Sums of any two of the following weights: $(-\epsilon, -\epsilon, -\epsilon)$, \newline$(-\epsilon, -\epsilon, \epsilon)$,\newline $(\epsilon, -\epsilon, -\epsilon)$\\
 \hline
 $G(3)$ & Sums of any distinct two following weights: \linebreak$-\mu_1$, $-\alpha$, $-\beta$& Sums of one weight from left column with one from right column &Sums of any two of the following weights: \linebreak$(-\omega_1+\omega_2, -\epsilon)$,\newline $(2\omega_1-\omega_2, -\epsilon)$, \newline$(0, -\epsilon)$, \newline$(\omega_1-\omega_2, -\epsilon)$,\newline $(-2\omega_1+\omega_2, -\epsilon)$,\newline $(-\omega_1, -\epsilon)$\\
 \hline
 $F(4)$ & Sums of any distinct two following weights: \linebreak$-\mu_1$, $-\nu_1$, $-\nu_2$, $-\nu_3$ & Sums of one weight from left column with one from right column &Sums of any two of the following weights: \linebreak $(\omega_2-\omega_3, -\epsilon)$, \newline $(\omega_1-\omega_2+\omega_3, -\epsilon)$,\newline $(\omega_1-\omega_3, -\epsilon)$, \newline$(-\omega_2+\omega_3, -\epsilon)$,\newline $(-\omega_1+\omega_2-\omega_3, -\epsilon)$,\newline $(-\omega_1+\omega_3, -\epsilon)$, \newline$(-\omega_3, -\epsilon)$\\
 \hline
\end{tabular}
\end{center}

\pagebreak

\begin{center}
\begin{tabular}{ |p{3.5cm}||p{3.1cm}|p{3cm}|p{3cm}|p{3cm}| }
 \hline
 \multicolumn{5}{|c|}{$\operatorname{H}^2(\mathfrak{n}, \mathbb{C})$ Cohomology Dimensions (Classical Cases)} \\
 \hline
 \textbf{Lie Superalgebra} & \textbf{Even+Even} & \textbf{Odd+Odd} & \textbf{Odd+Even} & \textbf{Total}\\
  \hline
 \hline
 $\mathfrak{gl}(n|n)$   & $4n^2-10n+8$ 
 
 & $4n^2-10n+8$ &
 $2(r^2+r)$ & $8n^2-20n+16$
 \\
 \hline
 $\mathfrak{gl}(m|n)$   & $\frac{1}{2}((n-1)^2+(n-1) \linebreak +(m-1)^2+(m-1)$ 
 
 & $(n+m-2)(2r)$&
 $2(r^2+r)$ & $\frac{1}{2}((n-1)^2+(n-1) +(m-1)^2+(m-1)+ (n+m-2)(2r)+2(r^2+r)$
 \\
 \hline
 
  $\mathfrak{osp}(2m|2n)$ & $\frac{1}{2}((n-1)^2+(n-1) \linebreak +(m-1)^2+(m-1)$ 
 
 & $(n+m-2)(2r)$&
 $2(r^2+r)$ & $\frac{1}{2}((n-1)^2+(n-1)+(m-1)^2+(m-1)+ (n+m-2)(2r)+2(r^2+r)$
 \\
 \hline
 
 $\mathfrak{osp}(2m+1|2n)$ & $\frac{1}{2}((n-1)^2+(n-1) \linebreak +(m-1)^2+(m-1)$ 
 
 & $(n+m-2)(2r)$&
 $2(r^2+r)$ & $\frac{1}{2}((n-1)^2+(n-1)+(m-1)^2+(m-1)+ (n+m-2)(2r)+2(r^2+r)$
 \\
 \hline
 
 $\mathfrak{q}(n)$ & $\frac{1}{2}((n-1)^2+(n-1))$ & $(n-1)^2$ & $\frac{1}{2}((n-1)^2+(n-1)^2$ & $2(n-1)^2+(n-1)$ \\
 \hline
 \end{tabular}
 \end{center}

 \begin{center}
\begin{tabular}{ |p{3.5cm}||p{3cm}|p{3cm}|p{3cm}|p{3cm}| }
 \hline
 \multicolumn{5}{|c|}{$\operatorname{H}^2(\mathfrak{n}, \mathbb{C})$ Cohomology Dimensions (Exceptional Cases)} \\
 \hline
 \textbf{Lie Superalgebra} & \textbf{Even+Even} & \textbf{Odd+Odd} & \textbf{Odd+Even} & \textbf{Total}\\
 \hline
  \hline
 $D(2,1,\alpha)$ & 3 & 9 & 6 & 18\\
 \hline
 $G(3)$ & 3 & 18 & 21 & 42 \\
 \hline
 $F(4)$ & 6 & 28 & 28 & 62 \\
 \hline
\end{tabular}
\end{center}

\pagebreak

\bibliographystyle{amsalpha}
\bibliography{bibliography}

\end{document}